\def\opn#1#2{\def#1{\operatorname{#2}}} 
\opn\chara{char} \opn\length{\ell} \opn\pd{pd} \opn\rk{rk}
\opn\projdim{proj\,dim} \opn\injdim{inj\,dim} \opn\rank{rank}
\opn\depth{depth} \opn\grade{grade} \opn\height{height}
\opn\embdim{emb\,dim} \opn\codim{codim}
\opn\Tr{Tr} \opn\bigrank{big\,rank}
\opn\superheight{superheight}\opn\lcm{lcm}
\opn\trdeg{tr\,deg}%
\opn\reg{reg} \opn\lreg{lreg} \opn\skel{skel}
\opn\multideg{multideg}
\opn\div{div} \opn\Div{Div} \opn\cl{cl} \opn\Cl{Cl}
\opn\Spec{Spec} \opn\Supp{Supp} \opn\supp{supp} \opn\Sing{Sing}
\opn\Ass{Ass}
\opn\Ann{Ann} \opn\Rad{Rad} \opn\Soc{Soc}
\opn\Ker{Ker} \opn\Coker{Coker} \opn\Im{Im} \opn\Hom{Hom}
\opn\Tor{Tor} \opn\Ext{Ext} \opn\End{End} \opn\Aut{Aut}
\opn\id{id}
\opn\nat{nat}
\opn\pff{pf}
\opn\Pf{Pf} \opn\GL{GL} \opn\SL{SL} \opn\mod{mod} \opn\ord{ord}
\opn\aff{aff} \opn\con{conv} \opn\relint{relint} \opn\st{st}
\opn\lk{lk} \opn\cn{cn} \opn\core{core} \opn\vol{vol}
\opn\link{link} \opn\star{star} \opn\skel{skel} \opn\Reg{Reg}
\opn\gr{gr}
\def\pot#1#2{#1[\kern-0.28ex[#2]\kern-0.28ex]}
\opn\dirlim{\underrightarrow{\lim}}
\opn\inivlim{\underleftarrow{\lim}}
\def\Implies{\ifmmode\Longrightarrow \else
     \unskip${}\Longrightarrow{}$\ignorespaces\fi}
\def\implies{\ifmmode\Rightarrow \else
     \unskip${}\Rightarrow{}$\ignorespaces\fi}
\def\iff{\ifmmode\Longleftrightarrow \else
     \unskip${}\Longleftrightarrow{}$\ignorespaces\fi}
\newtheorem{thm}{Theorem}[section]
\newtheorem{cor}[thm]{Corollary}
\newtheorem{lem}[thm]{Lemma}
\newtheorem{prop}[thm]{Proposition}
\newtheorem{defn}[thm]{Definition}
\newtheorem{rem}[thm]{Remark}
\numberwithin{equation}{section}
\begin{document}

\bibliographystyle{amsplain}

\title{ The regularity of binomial edge ideals of graphs }
\author{ Dariush Kiani and Sara Saeedi Madani }
\thanks{2010 \textit{Mathematics Subject Classification.} 05E40, 05C25, 16E05, 13C05}
\thanks{\textit{Key words and phrases.} Binomial edge ideal, Castelnuovo-Mumford regularity. }

\address{Dariush Kiani, Department of Pure Mathematics,
 Faculty of Mathematics and Computer Science,
 Amirkabir University of Technology (Tehran Polytechnic),
424, Hafez Ave., Tehran 15914, Iran, and School of Mathematics, Institute for Research in Fundamental Sciences (IPM),
P.O. Box 19395-5746, Tehran, Iran.} \email{dkiani@aut.ac.ir, dkiani7@gmail.com}
\address{Sara Saeedi Madani, Department of Pure Mathematics,
 Faculty of Mathematics and Computer Science,
 Amirkabir University of Technology (Tehran Polytechnic),
424, Hafez Ave., Tehran 15914, Iran, and School of Mathematics, Institute for Research in Fundamental Sciences (IPM),
P.O. Box 19395-5746, Tehran, Iran.} \email{sarasaeedi@aut.ac.ir}

\begin{abstract}
We prove two recent conjectures on some upper bounds for the Castelnuovo-Mumford regularity of the binomial edge ideals of some different classes of graphs. We prove the conjecture of Matsuda and Murai for graphs which has a cut edge or a simplicial vertex, and hence for chordal graphs. We determine the regularity of the binomial edge ideal of the join of graphs in terms of the regularity of the original graphs, and consequently prove the conjecture of Matsuda and Murai for such a graph, and hence for complete $t$-partite graphs. We also generalize some results of Schenzel and Zafar about complete $t$-partite graphs. We also prove the conjecture due to the authors for a class of chordal graphs.
\end{abstract}

\maketitle

\section{ Introduction }\label{Introduction}

\noindent The binomial edge ideal of a graph was introduced in \cite{HHHKR}, and \cite{O} at about the same time.
Let $G$ be a finite simple graph with vertex set $[n]$ and edge set $E(G)$.
Also, let $S=K[x_1,\ldots,x_n,y_1,\ldots,y_n]$ be the polynomial ring over a field $K$. Then the \textbf{binomial edge ideal} of $G$ in $S$,
denoted by $J_G$, is generated by binomials $f_{ij}=x_iy_j-x_jy_i$, where $i<j$ and $\{i,j\}\in E(G)$. Also, one could see this ideal as an ideal
generated by a collection of 2-minors of a $(2\times n)$-matrix whose entries are all indeterminates. Many of the algebraic properties of such ideals
were studied in \cite{D}, \cite{EHH}, \cite{EZ}, \cite{HHHKR}, \cite{SK}, \cite{SZ}, \cite{Z} and \cite{ZZ}. In \cite{EHHQ}, the authors introduced the binomial edge ideal of a pair of graphs, as a
generalization of the binomial edge ideal of a graph. Let $G_1$ be a graph on the vertex set $[m]$ and $G_2$ a graph on the vertex set $[n]$, and
let $X= (x_{ij})$ be an $(m\times n)$-matrix of indeterminates. Let
$S=K[X]$ be the polynomial ring in the variables $x_{ij}$, where $i=1,\ldots,m$ and $j=1,\ldots,n$. Let $e=\{i,j\}$ for some $1\leq i < j\leq m$
and $f=\{t, l\}$ for some $1\leq t < l\leq n$. To
the pair $(e,f)$, the following $2$-minor of $X$ is assigned:
\[p_{e,f}=[i,j|t,l]=x_{it}x_{jl}-x_{il}x_{jt}.\] Then, the ideal \[J_{G_1,G_2}=(p_{e,f}:~e\in E(G_1), f\in E(G_2))\] is called
the \textbf{binomial edge ideal of the pair} $(G_1,G_2)$. Some properties of this ideal were studied in \cite{SK1}. Note that if $G_1$ is just an edge, then $J_{G_1,G_2}$ is isomorphic to $J_{G_2}$.

In \cite{SK1}, the authors posed a question about the Castelnuovo-Mumford regularity of the binomial edge ideal of graphs, which is if $\mathrm{reg}(J_G)\leq c(G)+1$, where $c(G)$ is the number of maximal cliques of $G$. They also answered this question for closed graphs, in \cite{SK}. In \cite{MM}, the authors gained an upper bound for the regularity of the binomial edge ideal of a graph on $n$ vertices. They showed that $\mathrm{reg}(J_G)\leq n$. They also posed a conjecture which is $\mathrm{reg}(J_G)\leq n-1$, whenever $G$ is not $P_n$, the path over $n$ vertices. In this paper, we investigate about the regularity of the binomial edge ideals and especially these two problems. This paper is organized as follows. In Section~\ref{Preliminaries}, we pose some definitions, facts and notation which will be used throughout the paper. In Section~\ref{The Castelnuovo-Mumford regularity of binomial edge ideals}, we show that the conjecture of Matsuda and Murai is true for every graph which has a cut edge or a simplicial vertex. Hence, we prove this conjecture for all chordal graphs. We also show that the regularity of the binomial edge ideal of the join of two graphs $G_1$ and $G_2$ (not both complete) is equal to $\mathrm{max}\{\mathrm{reg}(J_{G_1}),\mathrm{reg}(J_{G_2}),3\}$. Applying this fact, we prove the conjecture of Matsuda and Murai, in the case of join of graphs, and hence for complete $t$-partite graphs. Then, we generalize some results of Schenzel and Zafar about complete $t$-partite graphs. Using a similar discussion as one appeared in \cite{EZ}, we prove that the conjecture due to the authors is true for a class of chordal graphs including block graphs, which we call them "generalized block graphs". Hence, we extend the recent results of Ene and Zarojanu about block graphs.

Throughout the paper, we mean by a graph $G$, a simple graph. Moreover, if $V=\{v_1,\ldots,v_n\}$ is the vertex set of $G$ (which contains $n$ elements), then, for simplicity, we denote it by $[n]$. 

\section{ Preliminaries }\label{Preliminaries}

\noindent In this section, we review some notions and facts around graphs and binomial ideals associated to graphs, which we need throughout.

In \cite{HHHKR}, the authors determined all graphs whose binomial edge ideal have a Gr\"{o}bner basis with respect to the lexicographic order induced by $x_1>\cdots >x_n>y_1>\cdots >y_n$, and called this class of graphs, \textbf{closed graphs}. There are also some combinatorial descriptions for these graphs, for example in \cite{EHH}, the authors proved that a graph $G$ is closed if and only if there exists a labeling of $G$ such that all facets of $\Delta(G)$ are intervals $[a,b]\subseteq [n]$. Here, $\Delta(G)$ is the clique complex of $G$, the simplicial complex whose facets are the vertex sets of the maximal cliques of $G$. We say that a vertex of $G$ is a \textbf{free vertex}, if it is a free vertex of the simplicial complex $\Delta(G)$, i.e. it is just contained in one facet of $\Delta(G)$. This is equivalent to say that such a vertex $v$ has this property that $N_G(v)$ induces a complete subgraph of $G$, and the vertex $v$ is also called a \textbf{simplicial vertex}. By $N_G(v)$, we mean the set of all neighbors (i.e. adjacent vertices) of the vertex $v$ in $G$.

Let $G$ and $H$ be two graphs on $[m]$ and $[n]$, respectively. We denote by $G*H$, the \textit{join} (product) of two graphs $G$ and $H$, that is
the graph with vertex set $[m]\cup [n]$, and the edge set $E(G)\cup E(H)\cup \{\{v,w\}~:~v\in [m],~w\in [n]\}$. In particular, the cone of a vertex $v$ on a graph $G$ is defined to be their join, that is $v*G$, and is sometimes denoted by $\mathrm{cone}(v,G)$. Let $V$ be a set. To simplify our notation throughout this paper, we introduce the join of two collection of subsets of $V$, $\mathcal{A}$ and $\mathcal{B}$, denoted by $\mathcal{A}\circ \mathcal{B}$, as $\{A\cup B: A\in \mathcal{A}, B\in \mathcal{B}\}$. If $\mathcal{A}_1,\ldots,\mathcal{A}_t$ are collections of subsets of $V$, then we denote their join, by $\bigcirc_{i=1}^{t}\mathcal{A}_i$.

Let $G$ be a graph and $e=\{v,w\}$ an edge of it. Then vertices $v$ and $w$ are called the endpoints of $e$. If $\{e_1,\ldots,e_t\}$ is a set of edges of $G$, then by $G\setminus \{e_1,\ldots,e_t\}$, we mean the graph on the same vertex set as $G$ in which the edges $e_1,\ldots,e_t$ are omitted. Here, we simply write $G\setminus e$, instead of $G\setminus \{e\}$. An edge $e$ of $G$ whose deletion from the graph, implies a graph with more connected components than $G$, is called a \textbf{cut edge} of $G$. Now, we recall a notation from \cite{MSh}. If $v,w$ are two vertices of a graph $G=(V,E)$ and $e=\{v,w\}$ is not an edge of $G$, then $G_e$ is defined to be the graph on the vertex set $V$, and the edge set $E \cup \{\{x,y\}~:~x,y\in N_G(v)~\mathrm{or}~x,y\in N_G(w)\}$. 

A vertex $v$ of $G$ whose deletion from the graph, implies a graph with more connected components than $G$, is called a \textbf{cut point} of $G$. A \textbf{nonseparable} graph is a connected and nontrivial graph with no cut points. A \textbf{block} of a graph is a maximal nonseparable subgraph of it. A \textbf{block graph} is a connected graph whose blocks are complete graphs (see \cite{H} for more information in this topic). A disconnected graph is called a block graph, if all of its connected components are block graphs. One can see that a graph $G$ is a block graph if and only if it is a chordal graph in which every two maximal cliques have at most one vertex in common. This class was considered in~\cite[Theorem~1.1]{EHH}. Here, we introduce a class of chordal graphs including block graphs: Let $G$ be a connected chordal graph such that for every three maximal cliques of $G$ which have a nonempty intersection, the intersection of each pair of them is the same. In other words, $G$ has the property that for every $F_i,F_j,F_k\in \Delta(G)$, if $F_i\cap F_j\cap F_k\neq \emptyset$, then $F_i\cap F_j=F_i\cap F_k=F_j\cap F_k$. We call $G$, a \textbf{generalized block} graph. A disconnected graph is called a generalized block graph, if all of its connected components are generalized block graphs. By the above, it is clear that a block graph is also a generalized block graph. Hence, a tree is also a generalized block graph. The graph depicted in Figure~\ref{Generalized} is a generalized block graph, which is not a block graph.
\begin{center}
\begin{figure}
\hspace{0 cm}
\includegraphics[height=2.1cm,width=3.2cm]{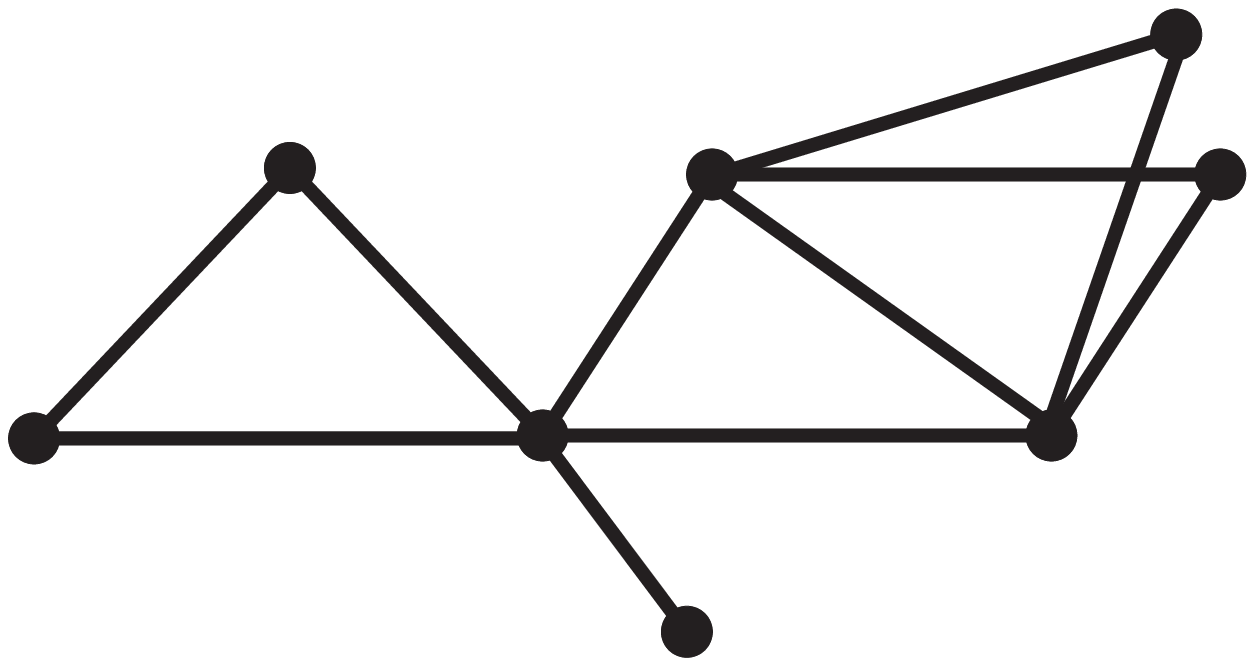}
\caption{\footnotesize{}}\hspace{2 cm}
\label{Generalized}
\end{figure}
\end{center}
Suppose that $G$ is a graph on $[n]$. Let $T$ be a subset of $[n]$, and let $G_1,\ldots,G_{c_G(T)}$ be the connected
components of $G_{[n]\setminus T}$, the induced subgraph of $G$ on $[n]\setminus T$. For each $G_i$, we denote by $\widetilde{G}_i$ the complete graph on the vertex set $V(G_i)$. If
there is no confusion, then we may simply write $c(T)$ instead of $c_G(T)$. Set $$P_T(G)=(\bigcup_{i\in T}\{x_i,y_i\}, J_{\widetilde{G}_1},\ldots,J_{\widetilde{G}_{c(T)}}).$$ Then, $P_T(G)$ is a prime ideal, where $\mathrm{height}\hspace{0.35mm}P_T(G)=n+|T|-c(T)$, by \cite[Lemma~3.1]{HHHKR}. Moreover, $J_G=\bigcap_{T\subset [n]}P_T(G)$, by \cite[Theorem~3.2]{HHHKR}. So that, $\mathrm{dim}\hspace{0.35mm}S/J_G=\mathrm{max}\{n-|T|+c(T):T\subset [n]\}$, by \cite[Cororally~3.3]{HHHKR}. If each $i\in T$ is a cut point of the graph $G_{([n]\setminus T)\cup \{i\}}$, then we say that $T$ has \textbf{cut point
property} for $G$. Let $\mathcal{C}(G)=\{\emptyset\}\cup \{T\subset [n]:T~\mathrm{has~cut~point~property~for}~G\}$. One has $\mathcal{C}(G)=\{\emptyset\}$ if
and only if $G$ is a complete graph. On the other hand, denoted by $\mathcal{M}(G)$, we mean the set of all minimal prime ideals of $J_G$. Then, one has $T\in \mathcal{C}(G)$ if and only if $P_T(G)\in \mathcal{M}(G)$, by \cite[Corollary~3.9]{HHHKR}.

\section{ The Castelnuovo-Mumford regularity of binomial edge ideals }\label{The Castelnuovo-Mumford regularity of binomial edge ideals}

\noindent In this section, we deal with two recent conjectures on the Castelnuovo-Mumford regularity of the binomial edge ideal of a graph, one is due to Matsuda and Murai and the other is due to the authors. Recall that the \textbf{Castelnuovo-Mumford regularity} (or simply, \textbf{regularity}) of a graded $S$-module $M$ is defined as
$$\mathrm{reg}(M)=\mathrm{max}\{j-i~:~\beta_{i,j}(M)\neq 0\}.$$
Denoted by $c(G)$, we mean the number of maximal cliques of the graph $G$. The following are those mentioned conjectures:\\

\noindent \textbf{Conjecture A.} (see \cite{MM}) Let $G$ be a graph on $n$ vertices which is not a path. Then $\mathrm{reg}(J_G)\leq n-1$.\\

\noindent \textbf{Conjecture B.} (see \cite{SK1}) Let $G$ be a graph. Then $\mathrm{reg}(J_G)\leq c(G)+1$.\\

The following bounds for the regularity were given in \cite{MM} and \cite{SK}:

\begin{thm}\label{Kiani-Saeedi}
\cite[Theorem~3.2]{SK} Let $G$ be a closed graph. Then $\mathrm{reg}(J_G)\leq c(G)+1$.
\end{thm}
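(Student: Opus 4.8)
The plan is to pass to the initial ideal and reduce the statement to a purely combinatorial bound on a squarefree monomial ideal. Since $G$ is closed, by the defining property of closed graphs the generators $f_{ij}$ form a Gr\"obner basis of $J_G$ with respect to the lexicographic order induced by $x_1>\cdots>x_n>y_1>\cdots>y_n$; hence $\operatorname{in}_<(J_G)=M$, where $M=(x_iy_j:\{i,j\}\in E(G),\ i<j)$. Because passing to an initial ideal never decreases regularity, $\reg(J_G)\le\reg(M)$, so it suffices to prove $\reg(M)\le c(G)+1$. Note that $M$ is generated in degree $2$ by squarefree monomials, and recall $\reg(M)=\reg(S/M)+1$.

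I would argue by induction on the number $c(G)$ of maximal cliques, exploiting that closedness provides an interval labeling whose facets are $[a_1,b_1],\dots,[a_c,n]$ with increasing left and right endpoints; in particular the top vertex $n$ is a free (simplicial) vertex lying only in the last facet $[a,n]$, so $N_G(n)\cap[1,n-1]=[a,n-1]$. For the base case $c(G)=1$ the graph is complete and $M=(x_iy_j:i<j)$ has a linear resolution, so $\reg(M)=2=c(G)+1$. For the inductive step I would split off the variable $y_n$ via the short exact sequence
\[
0\to \bigl(S/(M:y_n)\bigr)(-1)\xrightarrow{\,\cdot y_n\,} S/M \to S/(M+(y_n))\to 0,
\]
which yields $\reg(S/M)\le\max\{\reg(S/(M:y_n))+1,\ \reg(S/(M+(y_n)))\}$.

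Then I would analyze the two quotients. On one hand $M+(y_n)=\operatorname{in}_<(J_{G\setminus n})+(y_n)$, and since $G\setminus n$ is again closed with $c(G\setminus n)\le c(G)$ while the extra variable $x_n$ does not affect regularity, the induction hypothesis gives $\reg(S/(M+(y_n)))\le c(G)$. On the other hand $M:y_n=(x_i:i\in[a,n-1])+(x_iy_j:\{i,j\}\in E(G),\ i<j<n)$; the linear forms $x_a,\dots,x_{n-1}$ form a regular sequence, and modding them out identifies $S/(M:y_n)$ with $\bar S/\bar Q$, where $\bar Q=(x_iy_j:\{i,j\}\in E(G),\ i<a)$ is the initial ideal of the binomial edge ideal of the graph $H$ obtained from $G$ by deleting all edges inside the last clique $[a,n]$. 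Granting that $H$ is closed with $c(H)\le c(G)-1$, the induction hypothesis gives $\reg(S/(M:y_n))\le c(G)-1$, hence $\reg(S/(M:y_n))+1\le c(G)$. Combining the two estimates yields $\reg(S/M)\le c(G)$, that is $\reg(M)\le c(G)+1$, and the theorem follows.

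The main obstacle is the combinatorial bookkeeping hidden in the last step: one must verify that the deletion $G\setminus n$ and the edge-deleted graph $H$ are genuinely closed, so that the induction hypothesis applies, and, crucially, that removing the internal edges of the last interval strictly lowers the count of maximal cliques, so that $c(H)\le c(G)-1$. Forcing the clique count to drop by exactly the right amount, rather than merely $c(H)\le c(G)$, which would only reprove a weaker bound, is the delicate point, and it is precisely here that the interval structure of the facets of $\Delta(G)$ must be used, for instance by checking that the last facet is no longer a maximal clique of $H$ and that no new maximal cliques are created.
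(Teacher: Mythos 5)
The paper you were given does not actually prove this statement: Theorem~\ref{Kiani-Saeedi} is quoted verbatim from \cite[Theorem~3.2]{SK}, so there is no in-paper argument to compare with and your proposal has to stand on its own. Its skeleton is sound --- passing to $\operatorname{in}_<(J_G)=(x_iy_j:\{i,j\}\in E(G),\ i<j)$, using $\reg(J_G)\le\reg(\operatorname{in}_<(J_G))$, and splitting off $y_n$ by the colon exact sequence are all legitimate moves --- but the colon branch, which you yourself single out as the delicate point, genuinely breaks. Take $G$ on $[4]$ with facets $[1,3]$ and $[2,4]$ (that is, $K_4$ minus the edge $\{1,4\}$), so $c(G)=2$ and $a=2$. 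Then $M:y_4=(x_2,x_3)+(x_1y_2,x_1y_3)$, and your graph $H$ has edge set $\{\{1,2\},\{1,3\}\}$. Here $c(H)\ge 2$, not $\le c(G)-1=1$: deleting the edges inside the last interval fragments every earlier facet that meets $[a,n]$ in two or more vertices, so new maximal cliques \emph{are} created, contrary to what you hoped to verify. Worse, $H$ is not closed with respect to the inherited labeling (the edges $\{1,2\}$ and $\{1,3\}$ share their smaller endpoint but $\{2,3\}\notin E(H)$), and a direct $S$-polynomial computation shows $x_2y_1y_3\in\operatorname{in}_<(J_H)\setminus(x_1y_2,x_1y_3)$, so $\bar Q$ is not the initial ideal of $J_H$ at all. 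Hence the induction hypothesis --- a statement about initial ideals of closed graphs with fewer cliques --- does not apply to $\bar Q$ on either count.

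There is also a smaller structural flaw: you induct on $c(G)$ but apply the hypothesis to $G\setminus n$, whose number of maximal cliques can equal $c(G)$ (this happens whenever the penultimate facet does not end at $n-1$); you should induct on the number of vertices, or on the pair $(n,c(G))$ lexicographically. None of this means the strategy is hopeless: in the example above $\bar Q=x_1(y_2,y_3)$ has a linear resolution, so the numerical bound $\reg(S/(M:y_n))\le c(G)-1$ does hold there. But establishing it requires a direct analysis of the monomial ideal $(x_iy_j:\{i,j\}\in E(G),\ i<a)$ using the interval structure of the facets (or some other device, as in the cited source \cite{SK}), not an appeal to the induction hypothesis for $H$; as written, that step is a genuine gap.
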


\begin{thm}\label{Matsuda-Murai}
\cite[Theorem~1.1]{MM} Let $G$ be a graph on $n$ vertices. Then $\mathrm{reg}(J_G)\leq n$.
\end{thm}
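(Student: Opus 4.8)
The plan is to prove the equivalent statement $\mathrm{reg}(S/J_G)\le n-1$ by induction on $n$, with an inner induction on the number of missing edges $\binom{n}{2}-|E(G)|$. The engine is Ohtani's decomposition: for a vertex $v$, let $G_v$ be the graph on $[n]$ obtained from $G$ by adding all edges between pairs of neighbours of $v$, so that $v$ becomes a free vertex of $G_v$, and let $G\setminus v$ denote the induced subgraph on $[n]\setminus\{v\}$. Then
\[ J_G = J_{G_v}\cap\big((x_v,y_v)+J_{G\setminus v}\big), \]
a fact that can also be recovered from the prime decomposition $J_G=\bigcap_T P_T(G)$ recalled in Section~\ref{Preliminaries} by separating the primes $P_T(G)$ according to whether $v\in T$. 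First I would record two elementary reductions. Writing $S'=K[x_i,y_i: i\in[n]\setminus v]$, we have (i) $\mathrm{reg}\big(S/((x_v,y_v)+J_H)\big)=\mathrm{reg}(S'/J_H)$ for every graph $H$ on $[n]\setminus v$, since $x_v,y_v$ are variables not occurring in $J_H$; and (ii) modulo $(x_v,y_v)$ the generators of $J_{G_v}$ through $v$ vanish while the rest generate $J_{G_v\setminus v}$, so that $J_{G_v}+(x_v,y_v)+J_{G\setminus v}=(x_v,y_v)+J_{(G\setminus v)'}$, where $(G\setminus v)':=G_v\setminus v$ is $G\setminus v$ with $N_G(v)$ turned into a clique.

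Next I would feed this decomposition into the Mayer--Vietoris sequence
\[ 0\to S/J_G\to S/J_{G_v}\oplus S/\big((x_v,y_v)+J_{G\setminus v}\big)\to S/\big((x_v,y_v)+J_{(G\setminus v)'}\big)\to 0, \]
whose maps are homogeneous of degree $0$. The standard regularity estimate for a short exact sequence then gives
\[ \mathrm{reg}(S/J_G)\le\max\big\{\mathrm{reg}(S/J_{G_v}),\ \mathrm{reg}(S'/J_{G\setminus v}),\ \mathrm{reg}(S'/J_{(G\setminus v)'})+1\big\}, \]
after using reduction (i) on the second and third terms. Both $G\setminus v$ and $(G\setminus v)'$ live on $n-1$ vertices, so by the outer induction hypothesis their contributions are $\le(n-1)-1=n-2$. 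Everything therefore comes down to bounding $\mathrm{reg}(S/J_{G_v})$.

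The treatment of $\mathrm{reg}(S/J_{G_v})$ is where the inner induction enters, and it is the main obstacle. If $G$ is a disjoint union of cliques then every vertex already has a complete neighbourhood, and I would argue directly: $S/J_G$ is the tensor product (over $K$) of the determinantal rings $S_i/J_{K_{n_i}}$, each of which is $2$-linear of regularity $1$, so $\mathrm{reg}(S/J_G)$ equals the number of components with at least two vertices, hence at most $\lfloor n/2\rfloor\le n-1$. Otherwise I would choose $v$ with $N_G(v)$ not a clique; then $G_v$ has the same vertex set $[n]$ but strictly fewer missing edges, so $\mathrm{reg}(S/J_{G_v})\le n-1$ by the inner induction hypothesis. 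Combining the three terms yields $\mathrm{reg}(S/J_G)\le\max\{n-1,\ n-2,\ (n-2)+1\}=n-1$, which closes the step; the base cases $n=1,2$ are immediate. The delicate point of the whole scheme is the bookkeeping of the two nested inductions: the term $S/J_{G_v}$ keeps all $n$ vertices, so the induction on $n$ is useless for it and one must fall back on the secondary induction on missing edges, whose base case is precisely the disjoint-union-of-cliques situation handled separately. I would consequently be most careful in verifying Ohtani's decomposition and the identification of the sum ideal in reduction (ii), since these are exactly what make each of the three terms in the Mayer--Vietoris sequence genuinely smaller in one of the two induction parameters.
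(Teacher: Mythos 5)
Your proposal is correct, but be aware that there is nothing in this paper to compare it against: Theorem~\ref{Matsuda-Murai} is quoted from Matsuda and Murai \cite{MM} and used as a black box, with no proof reproduced here. Your argument is therefore a genuinely independent route. Its engine, the decomposition $J_G=J_{G_v}\cap\big((x_v,y_v)+J_{G\setminus v}\big)$, is Ohtani's lemma, which does hold (and for a non-simplicial $v$ --- the only case you invoke in the induction step --- it is exactly the standard statement used in \cite{EZ}); your identification of the sum ideal as $(x_v,y_v)+J_{G_v\setminus v}$ is also right, and the lexicographic double induction on $(n,\ \#\text{missing edges})$ is well founded: $G\setminus v$ and $G_v\setminus v$ drop the first coordinate, $G_v$ keeps $n$ but strictly gains edges since $v$ is chosen non-simplicial, and the inner base case (every vertex simplicial, hence a disjoint union of cliques) is correctly dispatched via the tensor decomposition and $\reg(S/J_{K_m})=1$. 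This differs both from the route of \cite{MM} and from the only related machinery this paper develops internally, namely the cut-edge colon $J_{G\setminus e}:f_e$ and mapping cones (Proposition~\ref{colon1}, Lemma~\ref{colon3}): the vertex decomposition makes all three terms of the Mayer--Vietoris sequence smaller in one of the two induction parameters, which is precisely what an edge-by-edge argument cannot achieve for the term that retains all $n$ vertices. The two points you should write out carefully are the ones you flag yourself: the nontrivial inclusion $J_{G_v}\cap\big((x_v,y_v)+J_{G\setminus v}\big)\subseteq J_G$ (the reverse inclusion is immediate), and, if you derive it from the decomposition $J_G=\bigcap_T P_T(G)$, the verification that $P_T(G)=P_T(G_v)$ for $T\not\ni v$ (the added edges lie inside $N_G(v)$, whose surviving vertices all sit in the component of $v$, so the components of $G_{[n]\setminus T}$ do not change) together with the fact that no minimal prime of $J_{G_v}$ has $v\in T$ because $v$ is simplicial in $G_v$.
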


Moreover, note that recently, Ene and Zarojanu computed the exact value of the regularity of the binomial edge ideal of a closed graph with respect to the graphical terms in \cite{EZ}, which also yields Theorem~\ref{Kiani-Saeedi}.
Also, by Theorem~\ref{Matsuda-Murai}, it is clear that Conjecture~B is true for trees and unicyclic graphs, whose unique cycles have length greater that three.

Before stating the main theorems, we introduce the notion of the \textit{reduced graph} of a graph.

\begin{defn}
{\em Let $G$ be a graph and $e$ a cut edge of $G$ such that its endpoints are the free vertices of $G\setminus e$. Then, we call $e$, a \textbf{free cut edge} of $G$. Suppose that $\{e_1,\ldots,e_t\}$ is the set of all free cut edges of $G$. Then, we call the graph $G\setminus  \{e_1,\ldots,e_t\}$, the \textbf{reduced graph} of $G$, and denote it by $\mathcal{R}(G)$. If $G$ does not have any free cut edges, then we set $\mathcal{R}(G):=G$. }
\end{defn}

The following are two main theorems of this section:

\begin{thm}\label{main}
Let $G\neq P_n$ be a graph on $n$ vertices which is a disconnected graph, or else \\
{\em{(a)}} has a simplicial vertex, or \\
{\em{(b)}} has a cut edge, or \\
{\em{(c)}} is the join of two graphs, or \\
{\em{(d)}} is an $n$-cycle. \\
Then, we have $\mathrm{reg}(J_G)\leq n-1$.
\end{thm}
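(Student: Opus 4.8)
The plan is to treat the five situations by a common mechanism---short exact sequences expressing $\mathrm{reg}(S/J_G)$ through the regularity of binomial edge ideals of graphs on fewer vertices (or the same graph with more edges)---and to feed the resulting inductions with the bound $\mathrm{reg}(J_H)\le|V(H)|$ of Theorem~\ref{Matsuda-Murai}. Throughout I pass between $\mathrm{reg}(J_G)$ and $\mathrm{reg}(S/J_G)=\mathrm{reg}(J_G)-1$, so the target is $\mathrm{reg}(S/J_G)\le n-2$. I would dispose of the disconnected case first, since it is immediate: if $G=G_1\sqcup\cdots\sqcup G_c$ with $c\ge 2$ and $n_i=|V(G_i)|$, then $S/J_G\cong\bigotimes_i S_i/J_{G_i}$ over $K$, so regularity is additive and $\mathrm{reg}(S/J_G)=\sum_i\mathrm{reg}(S_i/J_{G_i})\le\sum_i(n_i-1)=n-c\le n-2$ by Theorem~\ref{Matsuda-Murai}.

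For (a) and (b) I would set up induction on $n$ and isolate two reduction lemmas. For a free vertex $v$ in a maximal clique $Q$, I expect $\mathrm{reg}(S/J_G)=\mathrm{reg}(S/J_{G\setminus v})$ when $|Q|\ge 3$ and $\mathrm{reg}(S/J_G)\le\mathrm{reg}(S/J_{G\setminus v})+1$ when $|Q|=2$; the first case already finishes (a) via Theorem~\ref{Matsuda-Murai} applied to the $(n-1)$-vertex graph $G\setminus v$, and the second closes by induction unless $G\setminus v=P_{n-1}$, which (as $G\ne P_n$) forces $G$ to be a tree that is either reducible through another leaf or is the claw $K_{1,3}$, itself a join handled in (c). For a cut edge $e=\{u,v\}$ with $G\setminus e=G_1\sqcup G_2$, the sequence
\[
0\to \bigl(S/(J_{G\setminus e}:f_e)\bigr)(-2)\xrightarrow{\;\cdot f_e\;} S/J_{G\setminus e}\to S/J_G\to 0
\]
gives $\mathrm{reg}(S/J_G)\le\max\{\mathrm{reg}(S/J_{G\setminus e}),\,\mathrm{reg}(S/(J_{G\setminus e}:f_e))+1\}$, and the first term is automatically $\le n-2$ by additivity and Theorem~\ref{Matsuda-Murai}; everything then rests on the colon term.

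To organize the two reductions uniformly I would use the reduced graph $\mathcal{R}(G)$: peeling off the free cut edges one at a time (each contributing at most $+1$ to $\mathrm{reg}(S/J_G)$, as the pendant computation shows) reduces the problem to the components $C_1,\dots,C_s$ of $\mathcal{R}(G)$, none of which has a free cut edge. The key point is that a connected graph reduces to isolated vertices under $\mathcal{R}$ exactly when it is a path; hence if $G\ne P_n$ at least one component $C_j$ is a connected non-path on $\ge 3$ vertices, and for such a ``core'' graph I must prove the sharper estimate $\mathrm{reg}(S/J_{C_j})\le|V(C_j)|-2$ directly---through a free vertex in a clique of size $\ge 3$, through a non-free cut edge, or because $C_j$ is a join or a cycle. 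Combining this one saving with Theorem~\ref{Matsuda-Murai} on the remaining components yields $\mathrm{reg}(S/J_G)\le n-2$.

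Case (c) I would deduce from the join formula $\mathrm{reg}(J_{G_1*G_2})=\max\{\mathrm{reg}(J_{G_1}),\mathrm{reg}(J_{G_2}),3\}$ established in this section (for $G_1,G_2$ not both complete): each of the three quantities is at most $n-1$ once $n\ge4$, while the remaining possibilities are the complete graph $K_n$ (regularity $2$) and the excluded paths $P_2,P_3$. For the $n$-cycle (d) I would apply the decomposition $J_G=\bigl((x_v,y_v)+J_{G\setminus v}\bigr)\cap J_{G^{(v)}}$ at a degree-two vertex $v$: here $G\setminus v=P_{n-1}$, the graph $G^{(v)}$ is $C_n$ with one added chord (so $v$ becomes simplicial and case (a) applies), and $G^{(v)}\setminus v=C_{n-1}$, so the Mayer--Vietoris sequence reduces everything to the $(n-1)$-cycle by induction, the base cases $C_3=K_3$ and $C_4=K_{2,2}$ being a complete graph and a join. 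The main obstacle, on which I would spend the most care, is the colon ideal $J_{G\setminus e}:f_e$: from the primary decomposition $J_{G\setminus e}=\bigcap_T P_T(G\setminus e)$ one sees that coloning by $f_e$ simply deletes the primes with $u\in T$ or $v\in T$, and the whole improvement from the Matsuda--Murai bound $n$ to $n-1$ hinges on showing that this deletion genuinely lowers the regularity precisely when the cut edge is not free, while staying harmless---and being absorbed by the $\mathcal{R}(G)$ bookkeeping---when it is free.
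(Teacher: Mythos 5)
Your handling of the disconnected case and of case (c) coincides with the paper's, and your Mayer--Vietoris sketch for the cycle is a plausible substitute for the paper's citation of \cite{ZZ}. However, the engine you propose for cases (a) and (b) has genuine gaps. The free-vertex reduction you ``expect'' is false: let $G$ be the diamond, i.e.\ $K_4$ on $\{1,2,3,4\}$ with the edge $\{1,4\}$ removed. Then $1$ is a free (simplicial) vertex lying in the maximal clique $\{1,2,3\}$ of size $3$, but $G=K_2*\overline{K_2}$, so $\mathrm{reg}(J_G)=3$ by the join formula, i.e.\ $\mathrm{reg}(S/J_G)=2$, while $\mathrm{reg}(S/J_{G\setminus 1})=\mathrm{reg}(S/J_{K_3})=1$. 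So even the inequality $\mathrm{reg}(S/J_G)\le\mathrm{reg}(S/J_{G\setminus v})$ on which your case (a) rests fails. In the leaf sub-case, the bound $\mathrm{reg}(S/J_G)\le\mathrm{reg}(S/J_{G\setminus v})+1$ only yields $n-2$ if you already know $\mathrm{reg}(S/J_{G\setminus v})\le n-3$, i.e.\ Conjecture~A for the arbitrary $(n-1)$-vertex graph $G\setminus v$, which need not satisfy any of your reduction hypotheses (take $G$ to be the Petersen graph with a pendant vertex); that induction is therefore circular. Finally, for case (b) you explicitly leave the colon term unresolved, and that term is precisely where the work lies.

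For comparison, the paper closes these gaps as follows. Case (a) is an induction on $n$ that stays inside the class of graphs with a simplicial vertex: for each edge $e$ at the simplicial vertex one uses $0\to \bigl(S/(J_{G\setminus e}:f_e)\bigr)(-2)\to S/J_{G\setminus e}\to S/J_G\to 0$, and the Mohammadi--Sharifan description of $J_{G\setminus e}:f_e$ shows (Lemma~\ref{colon3}) that this colon ideal equals $J$ of an induced subgraph on at most $n-2$ vertices plus variables, so $\mathrm{reg}(J_{G\setminus e}:f_e)\le n-2$ follows from Theorem~\ref{Matsuda-Murai} alone; the term $\mathrm{reg}(J_{G\setminus e})$ is bounded by $n-1$ separately because after deleting all but one edge at $v$ the vertex becomes a leaf and then isolated. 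For case (b) the key observation you are missing is that $(G\setminus e)_e$ is disconnected and \emph{each of its two components has a simplicial vertex} (the endpoints of $e$ become simplicial once their neighbourhoods are completed), and they are not both paths since $G$ is not; hence case (a) applied to one component, Theorem~\ref{Matsuda-Murai} applied to the other, and additivity over components give $\mathrm{reg}(J_{(G\setminus e)_e})\le n-2$, which is exactly the bound on the colon term that your plan lacks. Without this observation, or a correct replacement for your false deletion lemma, the argument for (a) and (b) does not go through.
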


\begin{thm}\label{main2}
Suppose that $G$ is a graph such that every connected components of $\mathcal{R}(G)$ is a closed graph, or a generalized block graph. Then, we have $\mathrm{reg}(J_G)\leq c(G)+1$.
\end{thm}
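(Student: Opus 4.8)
The plan is to reduce the general statement to the case where $G$ itself is connected and already equals its own reduced graph, and then to prove the bound $\mathrm{reg}(J_G)\le c(G)+1$ separately for the two allowed types of component. First I would handle the passage from $G$ to $\mathcal{R}(G)$. Removing free cut edges is the natural first step because each free cut edge $e=\{u,v\}$ has both endpoints simplicial in $G\setminus e$, so deleting it splits off a manageable piece. The key technical device here should be a regularity formula (or inequality) for the binomial edge ideal of a graph that is glued along a free vertex, in the spirit of the cut-edge analysis underlying Theorem~\ref{main}(b). Concretely, I expect a lemma of the form: if $e$ is a free cut edge of $G$, then $\mathrm{reg}(J_G)=\mathrm{reg}(J_{G\setminus e})$ (or differs by a controlled amount), while simultaneously $c(G)=c(G\setminus e)+1$ when the edge $e$ is itself a maximal clique. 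Iterating over all free cut edges $e_1,\dots,e_t$ then transfers the desired inequality for $\mathcal{R}(G)$ back to $G$, since each deletion increases the maximal-clique count by exactly one and the regularity compensates correctly. Because a disconnected graph has $J_G$ equal to the sum of the binomial edge ideals of its components in disjoint variable sets, $\mathrm{reg}$ and $c$ both add up over components (up to the usual $+1$ bookkeeping from tensoring), so it suffices to prove the bound for each connected component of $\mathcal{R}(G)$.

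Next I would dispose of the two shapes a connected component of $\mathcal{R}(G)$ can take. If the component is a closed graph, the bound $\mathrm{reg}(J_G)\le c(G)+1$ is immediate from Theorem~\ref{Kiani-Saeedi} (equivalently from the exact Ene--Zarojanu formula), so there is nothing to do beyond citing it. The substantive work is the generalized block graph case. Here I would argue by induction on the number of vertices (or on the number of maximal cliques), using the decomposition $J_G=\bigcap_{T\in\mathcal{C}(G)}P_T(G)$ and an inductive reduction that peels off a single maximal clique or a single cut point. The natural tool, as the introduction signals, is the technique from \cite{EZ}: one picks a suitable cut point $v$ whose deletion or whose local modification $G_e$ breaks $G$ into strictly smaller generalized block graphs, and then runs a short exact sequence relating $J_G$, $J_{G\setminus e}$, and $J_{(G\setminus e)_e}$ (or the analogous Herzog--Mahmoudi type sequence for adding an edge at a simplicial-type vertex).

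The engine of the induction should be a Mayer--Vietoris or mapping-cone style short exact sequence. If $v$ is a cut point of the generalized block graph $G$ lying in maximal cliques $F_1,\dots,F_s$, then one can write down an exact sequence of the form
\begin{equation*}
0 \longrightarrow S/J_G \longrightarrow S/J_{G_1} \oplus S/J_{G_2} \longrightarrow S/(J_{G_1}+J_{G_2}) \longrightarrow 0,
\end{equation*}
where $G_1,G_2$ are the two generalized block graphs obtained by separating $G$ at $v$ and the overlap $G_1\cap G_2$ is again (generalized block, in fact complete on a face). Taking regularity along this sequence, with $\mathrm{reg}(J_{G_i})$ controlled by induction and $\mathrm{reg}$ of the overlap controlled because it is complete (its $J$ has regularity $2$), yields $\mathrm{reg}(J_G)\le \max\{\mathrm{reg}(J_{G_1}),\mathrm{reg}(J_{G_2})\}+\varepsilon$, and the maximal-clique bookkeeping $c(G)=c(G_1)+c(G_2)-(\text{overlap count})$ must be checked to absorb the error term $\varepsilon$.

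The main obstacle I anticipate is precisely this bookkeeping in the generalized block graph case: ensuring that the additive behaviour of $c(G)$ under splitting at a cut point exactly matches the additive (or $\max$) behaviour of $\mathrm{reg}(J_G)$ coming from the short exact sequence, so that the inductive inequality closes with the constant $+1$ and not $+2$. This is where the defining hypothesis of a generalized block graph --- that any three pairwise-intersecting maximal cliques share a common intersection, forcing all pairwise intersections to coincide --- must be used in an essential way, since it guarantees that the overlaps arising in the decomposition are single complete faces (hence contribute regularity exactly $2$) rather than more complicated configurations that would inflate the bound. Verifying that this structural condition is preserved by the inductive surgeries, and that it is exactly what rules out the extra $+1$, will be the delicate point; everything else is a routine application of the short exact sequence together with Theorem~\ref{Kiani-Saeedi}.
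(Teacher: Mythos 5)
Your overall architecture matches the paper's: pass from $G$ to $\mathcal{R}(G)$ by peeling off free cut edges, handle closed components by citing Theorem~\ref{Kiani-Saeedi}, and prove the bound for generalized block graphs by induction. The first two steps are essentially the paper's Proposition~\ref{colon2}, Corollary~\ref{reg-reduced} and Theorem~\ref{construction} (one small correction: for a free cut edge $e$ the mapping cone is minimal and gives $\mathrm{reg}(J_G)=\mathrm{reg}(J_{G\setminus e})+1$, not equality of regularities; the clique count also goes up by one per free cut edge, and these two increments cancel against each other, which is exactly the content of Theorem~\ref{construction}).

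The genuine gap is in the generalized block graph case, which is the core of the theorem. The short exact sequence you propose,
$0 \to S/J_G \to S/J_{G_1}\oplus S/J_{G_2} \to S/(J_{G_1}+J_{G_2})\to 0$
with $G_1,G_2$ the two pieces of $G$ separated at a cut point $v$, is not exact: it would require $J_G=J_{G_1}\cap J_{G_2}$, whereas in fact $J_G=J_{G_1}+J_{G_2}$ (the generators are indexed by edges), and $J_{G_1}\cap J_{G_2}$ is strictly smaller than $J_G$; so $S/J_G$ sits at the cokernel end, not the kernel end, and you are left having to control $\mathrm{reg}(J_{G_1}\cap J_{G_2})$, for which you give no handle. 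The paper's decomposition (Theorem~\ref{reg-generalized}) is genuinely different: it takes a leaf $F_c$ of the quasi-tree $\Delta(G)$ with branches $F_{t_1},\dots,F_{t_q}$, uses the generalized block hypothesis to show all these facets meet in one common set $A$, and then splits the primary decomposition $J_G=\bigcap_{T\in\mathcal{C}(G)}P_T(G)$ into $Q\cap Q'$ according to whether $A\cap T=\emptyset$ or $A\subseteq T$ (the hypothesis guarantees no $T\in\mathcal{C}(G)$ meets $A$ properly). Crucially, $Q$ is then identified as $J_{G'}$ for a graph $G'$ obtained from $G$ by \emph{adding} edges (merging $F_c$ and its branches into one clique), and $Q'=(x_i,y_i:i\in A)+J_{G_{[n]\setminus A}}$ involves the variable ideal on $A$; neither piece is the binomial edge ideal of a subgraph of $G$ obtained by "separating at $v$." Your instinct that the defining hypothesis of generalized block graphs must force the overlap to be clean is right in spirit, but the mechanism is the dichotomy $A\subseteq T$ versus $A\cap T=\emptyset$ on minimal primes, not a statement about a complete overlap contributing regularity $2$. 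Without the correct identification of $Q$, $Q'$ and $Q+Q'$, the induction does not close.
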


We proceed this section to prove the above theorems. The following theorem might be more general than Theorem~\ref{main2}. 

\begin{thm}\label{construction}
Let $G$ be a connected graph on $n$ vertices and $R_1,\ldots,R_q$ the connected components of $\mathcal{R}(G)$. If Conjecture~B is true for all $R_1,\ldots,R_q$, then it is also true for $G$.
\end{thm}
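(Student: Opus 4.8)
The plan is to reduce the regularity of $J_G$ to that of the binomial edge ideals of the reduced components $R_1,\ldots,R_q$ by successively removing free cut edges, controlling how the regularity and the maximal-clique count change at each step. The key observation should be that removing a single free cut edge $e=\{v,w\}$ splits off cleanly: since $v$ and $w$ are free vertices of $G\setminus e$, the neighborhoods $N(v)$ and $N(w)$ each induce a clique, and the edge $e$ sits between two maximal cliques meeting only along $e$. This is exactly the local structure for which one expects a short exact sequence relating $J_G$, $J_{G\setminus e}$, and some auxiliary ideal. First I would establish such a sequence: writing $e=\{v,w\}$, one has $J_G = J_{G\setminus e} + (f_{vw})$, and I would study the colon ideal $(J_{G\setminus e} : f_{vw})$ to obtain the short exact sequence
\begin{equation*}
0 \To \bigl(S/(J_{G\setminus e}:f_{vw})\bigr)(-2) \To S/J_{G\setminus e} \To S/J_G \To 0,
\end{equation*}
where the shift by $2$ records that $f_{vw}$ has degree $2$.

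The heart of the argument is to identify the colon ideal $(J_{G\setminus e}:f_{vw})$ combinatorially. Because $e$ is a \emph{free} cut edge, I expect this colon to be the binomial edge ideal of a graph obtained from $G\setminus e$ by adding all edges among $N(v)\cup\{v\}$ and among $N(w)\cup\{w\}$ (essentially a $G_e$-type construction, using the notation recalled from \cite{MSh}), possibly together with the two extra variables associated to the cut point created by deleting $e$. Once the colon ideal is recognized as $J_{G'}$ for an explicit graph $G'$ whose components are again reduced pieces or cliques, I would apply the standard regularity inequality for a short exact sequence, namely $\reg(S/J_G) \le \max\{\reg(S/J_{G\setminus e}),\ \reg(S/(J_{G\setminus e}:f_{vw}))+1\}$, and bound each term. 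Here the hypothesis that Conjecture~B holds for each $R_i$ enters: after all free cut edges are removed, the components are precisely $R_1,\ldots,R_q$, for which $\reg(J_{R_i})\le c(R_i)+1$ is assumed.

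The induction is then on the number $t$ of free cut edges of $G$. For the inductive step I would need two bookkeeping facts. First, $J_{G\setminus e}$ decomposes as a sum over the two connected components created by deleting the cut edge, and regularity is additive in the right way for binomial edge ideals supported on disjoint vertex sets, so $\reg(S/J_{G\setminus e})$ is controlled by the regularities of the two pieces, each having strictly fewer free cut edges. Second, I must track the clique count: removing the free cut edge $e$ changes $c(G)$ in a predictable way (the edge $e$ is itself a maximal clique, and its deletion merges it into neighboring structure), so that the bounds $c(\cdot)+1$ accumulate correctly and yield $\reg(J_G)\le c(G)+1$ at the end. The main obstacle, and the step I would spend the most care on, is the exact identification of the colon ideal $(J_{G\setminus e}:f_{vw})$ and verifying that its regularity contributes at most $c(G)+1$ after the degree shift; getting the additive bookkeeping of maximal cliques to match the short-exact-sequence shift of $+1$ is where an off-by-one error is most likely to hide, so I would verify it on a small example (such as a path, where equality with $\reg = c+1$ should hold) before committing to the general count.
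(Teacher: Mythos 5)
Your proposal follows essentially the same route as the paper: the short exact sequence $0\to \bigl(S/(J_{G\setminus e}:f_e)\bigr)(-2)\to S/J_{G\setminus e}\to S/J_G\to 0$, the identification of the colon ideal via the $G_e$-construction of Mohammadi and Sharifan, the mapping-cone regularity bound, additivity of regularity over connected components, and the clique count $c(G)=\sum_{i=1}^q c(R_i)+q-1$. The one correction is that for a free cut edge the colon ideal is exactly $J_{(G\setminus e)_e}=J_{G\setminus e}$ with no extra variables (freeness of the endpoints means $(G\setminus e)_e$ adds no edges), which is precisely what makes the bookkeeping close; the paper even upgrades your inequality to the equality $\reg(J_G)=\reg(J_{G\setminus e})+1$ by observing that the mapping cone is minimal, though the inequality suffices for this theorem.
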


To prove Theorem~\ref{construction}, we need the following propositions. Here, by $f_e$, we mean the binomial $f_{ij}=x_iy_j-x_jy_i$, where $e=\{i,j\}$ is an edge of $G$.

\begin{prop}\label{colon1}
Let $G$ be a graph and $e$ be a cut edge of $G$. Then we have \\
{\em{(a)}} $\beta_{i,j}(J_G)\leq \beta_{i,j}(J_{G\setminus e})+\beta_{i-1,j-2}(J_{{(G\setminus e)}_e})$, for all $i,j\geq 1$, \\
{\em{(b)}} $\mathrm{pd}(J_G)\leq \mathrm{max}\{\mathrm{pd}(J_{G\setminus e}),\mathrm{pd}(J_{{(G\setminus e)}_e})+1\}$, \\
{\em{(c)}} $\mathrm{reg}(J_G)\leq \mathrm{max}\{\mathrm{reg}(J_{G\setminus e}),\mathrm{reg}(J_{{(G\setminus e)}_e})+1\}$.
\end{prop}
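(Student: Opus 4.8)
The plan is to exploit the decomposition $J_G = J_{G\setminus e} + (f_e)$, which holds because $e=\{v,w\}$ is an edge of $G$ but not of $G\setminus e$. Multiplication by the degree-two binomial $f_e$ then yields the short exact sequence of graded $S$-modules
$$0 \longrightarrow \Big(S/(J_{G\setminus e} : f_e)\Big)(-2) \xrightarrow{\;\cdot f_e\;} S/J_{G\setminus e} \longrightarrow S/J_G \longrightarrow 0 .$$
Write $A=(S/J_{(G\setminus e)_e})(-2)$, $B=S/J_{G\setminus e}$, $C=S/J_G$. Everything will follow once I identify the colon ideal, and my central claim is that the cut-edge hypothesis forces $(J_{G\setminus e} : f_e) = J_{(G\setminus e)_e}$, so that $A$ is the first term of the sequence above.

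For the inclusion $J_{(G\setminus e)_e} \subseteq (J_{G\setminus e} : f_e)$ I would argue on generators. The generators of $J_{(G\setminus e)_e}$ beyond those of $J_{G\setminus e}$ are the binomials $f_{aa'}$ with $a,a' \in N_{G\setminus e}(v)$ (and symmetrically for $w$). From the elementary identity $x_v f_{aa'} = x_a f_{va'} - x_{a'} f_{va}$ and its $y$-analogue one gets $x_v f_{aa'},\, y_v f_{aa'} \in J_{G\setminus e}$, whence $f_e f_{aa'} = y_w(x_v f_{aa'}) - x_w(y_v f_{aa'}) \in J_{G\setminus e}$; this direction needs no hypothesis on $e$.

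The reverse inclusion $(J_{G\setminus e} : f_e) \subseteq J_{(G\setminus e)_e}$ is where the real difficulty lies, and it is exactly here that the cut-edge property is indispensable: for a non-cut edge, for instance an edge of a triangle, a direct computation shows the colon is strictly larger than $J_{(G\setminus e)_e}$. I would prove the inclusion using the primary decomposition $J_{G\setminus e} = \bigcap_{T\in\mathcal{C}(G\setminus e)} P_T(G\setminus e)$. Since each $P_T(G\setminus e)$ is prime, $(J_{G\setminus e} : f_e) = \bigcap\{P_T(G\setminus e) : f_e \notin P_T(G\setminus e)\}$. Because $e$ is a cut edge, $v$ and $w$ lie in different connected components of $G\setminus e$, and hence remain in different components of every induced subgraph $(G\setminus e)_{[n]\setminus T}$; consequently $f_e \in P_T(G\setminus e)$ if and only if $v\in T$ or $w\in T$. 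The task then reduces to matching the surviving primes $\{P_T(G\setminus e): v,w\notin T\}$ with the minimal primes of $J_{(G\setminus e)_e}$ arising from $\mathcal{C}((G\setminus e)_e)$, using the description of the $P_T$ via completed components together with the observation that completing the neighbourhoods of $v$ and $w$ is precisely what is recorded by those $T$ avoiding $v$ and $w$. Verifying that these two intersections of primes coincide is the main obstacle.

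Granting the colon identity, parts (a)--(c) are formal consequences of the displayed sequence. The long exact sequence in $\Tor_\bullet^S(-,K)$ gives $\beta_{i,j}(C) \le \beta_{i,j}(B) + \beta_{i-1,j}(A)$; rewriting the shift as $\beta_{i-1,j}(A) = \beta_{i-1,j-2}(S/J_{(G\setminus e)_e})$ and passing from quotients to ideals via $\beta_{p,q}(S/I) = \beta_{p-1,q}(I)$ yields (a). Part (b) follows from the standard estimate $\pd C \le \max\{\pd B,\, \pd A + 1\}$ applied to the sequence, after the identity $\pd(S/I) = \pd(I)+1$. Part (c) follows from $\reg C \le \max\{\reg B,\, \reg A - 1\}$ together with $\reg(A) = \reg(S/J_{(G\setminus e)_e}) + 2$ and $\reg(S/I) = \reg(I)+1$, which combine the $-1$ and the $+2$ into the single $+1$ appearing in the bound.
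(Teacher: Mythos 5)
Your overall architecture is exactly the paper's: the short exact sequence
$0\to \big(S/(J_{G\setminus e}:f_e)\big)(-2)\xrightarrow{f_e} S/J_{G\setminus e}\to S/J_G\to 0$, the identification $J_{G\setminus e}:f_e=J_{(G\setminus e)_e}$ for a cut edge, and the mapping-cone/long-exact-sequence bookkeeping. Your formal deductions of (a), (b), (c) from the sequence are correct, including the handling of the twist $(-2)$ and the shift between Betti numbers of $S/I$ and of $I$.

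The one genuine gap is the step you yourself flag: the reverse inclusion $(J_{G\setminus e}:f_e)\subseteq J_{(G\setminus e)_e}$. Your reduction is sound as far as it goes --- since $J_{G\setminus e}=\bigcap_T P_T(G\setminus e)$ is an intersection of primes, the colon is the intersection of those $P_T(G\setminus e)$ not containing $f_e$, and because $e$ is a cut edge these are exactly the $P_T$ with $v,w\notin T$ (here one should note that $f_e\notin P_T$ because $v$ and $w$ land in different tensor factors of the domain $S/P_T$). But the remaining task, showing that $\bigcap_{T:\,v,w\notin T}P_T(G\setminus e)$ coincides with the intersection of the primes $P_{T'}\big((G\setminus e)_e\big)$ defining $J_{(G\setminus e)_e}$, is not carried out; matching the two families of primes (and in particular accounting for how completing $N(v)$ and $N(w)$ changes the components and the cut-point-property sets) is precisely the nontrivial content here. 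This identity is Theorem~3.4 of Mohammadi and Sharifan, which the paper simply cites (it is restated as Theorem~\ref{M-colon1}); so either complete that verification or cite the result, after which your argument is a correct proof by the same route as the paper's.
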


\begin{proof}
It is enough to consider the short exact sequence $$0\longrightarrow S/J_{G\setminus e}:f_e(-2)\stackrel{f_e} \longrightarrow S/J_{G\setminus e}\longrightarrow S/J_G\rightarrow 0.$$ Then, the statements follow by the mapping cone, and the fact $J_{G\setminus e}:f_e=J_{{(G\setminus e)}_e}$, (see \cite[Theorem~3.4]{MSh}).
\end{proof}

Using the above proposition, we partially answer a question that J. Herzog had posed in a discussion with us. He asked if $\beta_{1,j}(J_G)=0$, for all $j>n$, where $G$ is a graph on $n\geq 4$ vertices. Note that in \cite[Theorem~2.2]{SK}, the authors mentioned that $\beta_{1,j}(J_G)=0$, for all $j>2n$.

\begin{cor}\label{Herzog's question}
Let $G$ be a forest on $n\geq 4$ vertices. Then $\beta_{1,j}(J_G)=0$, for all $j>n$.
\end{cor}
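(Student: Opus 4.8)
The plan is to induct on the number of edges of $G$, peeling off one edge at a time with Proposition~\ref{colon1}(a). The crucial structural feature of a forest is that \emph{every} edge is a cut edge: deleting any edge increases the number of connected components. Hence at each stage of the induction there is always an edge to which Proposition~\ref{colon1} applies, and deleting it keeps us within the class of forests.

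For the base case I would take $G$ with no edges at all. Then $J_G = 0$, so $\beta_{1,j}(J_G) = 0$ for every $j$ and there is nothing to prove. For the inductive step, assume $G$ has at least one edge and fix such an edge $e$, which is necessarily a cut edge. Applying Proposition~\ref{colon1}(a) with $i = 1$ gives
$$\beta_{1,j}(J_G) \leq \beta_{1,j}(J_{G\setminus e}) + \beta_{0,j-2}(J_{(G\setminus e)_e})$$
for all $j \geq 1$. The key observation is that the binomial edge ideal of any graph is generated in degree $2$, so $\beta_{0,k}(J_{(G\setminus e)_e}) = 0$ unless $k = 2$. In particular the correction term $\beta_{0,j-2}(J_{(G\setminus e)_e})$ vanishes whenever $j - 2 > 2$, that is, whenever $j > 4$. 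Since $n \geq 4$, the relevant range $j > n$ forces $j \geq n + 1 \geq 5 > 4$, and therefore $\beta_{1,j}(J_G) \leq \beta_{1,j}(J_{G\setminus e})$ for all such $j$.

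To close the induction I would note that $G \setminus e$ is again a forest on the \emph{same} $n \geq 4$ vertices, but with one fewer edge. The induction hypothesis then gives $\beta_{1,j}(J_{G\setminus e}) = 0$ for all $j > n$, and combining this with the displayed inequality yields $\beta_{1,j}(J_G) = 0$ for all $j > n$, as desired.

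I do not anticipate a genuine obstacle: the argument is short once Proposition~\ref{colon1} is in hand. The only two points demanding care are, first, that the induction must be run on the number of edges rather than on $n$, since deleting a cut edge leaves the vertex set unchanged; and second, that the hypothesis $n \geq 4$ is exactly what is needed to guarantee $j > 4$, so that the degree-$2$ generation of $J_{(G\setminus e)_e}$ forces the term $\beta_{0,j-2}(J_{(G\setminus e)_e})$ to drop out for every $j$ in the range of interest.
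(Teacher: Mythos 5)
Your proof is correct and follows essentially the same route as the paper: induction on the number of edges, using the fact that every edge of a forest is a cut edge, applying Proposition~\ref{colon1}(a) with $i=1$, and killing the term $\beta_{0,j-2}(J_{(G\setminus e)_e})$ because $j>n\geq 4$ forces $j-2>2$ while the ideal is generated in degree $2$. No substantive difference from the paper's argument.
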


\begin{proof}
We use the induction on the number of edges of a forest. If $G$ is a graph with no edges, then $J_G=(0)$, and hence the result is obvious. Now, let $G$ be a forest on $n\geq 4$ vertices and at least one edge. Let $e$ be an arbitrary edge of $G$. So that $e$ is clearly a cut edge of $G$. Thus, by Proposition~\ref{colon1}, for all $j>n$, we have $\beta_{1,j}(J_G)\leq \beta_{1,j}(J_{G\setminus e})+\beta_{0,j-2}(J_{{(G\setminus e)}_e})$. But $\beta_{0,j-2}(J_{{(G\setminus e)}_e})=0$, since $j\geq 5$. Thus, we have $\beta_{1,j}(J_G)\leq \beta_{1,j}(J_{G\setminus e})$, for all $j>n$. By the induction hypothesis, we have $\beta_{1,j}(J_{G\setminus e})=0$, for all $j>n$. Thus, the desired result follows.
\end{proof}

In the statements of the above proposition, the equality does not occur necessarily, because the free resolution which is obtained for $S/J_G$ by mapping cone is not minimal necessarily. But, in a more special case, as in the next proposition, we get the minimal free resolution and hence desired equalities.

\begin{prop}\label{colon2}
Let $G$ be a graph and $e$ be a free cut edge of $G$. Then we have \\
{\em{(a)}} $\beta_{i,j}(J_G)=\beta_{i,j}(J_{G\setminus e})+\beta_{i-1,j-2}(J_{G\setminus e})$, for all $i,j\geq 1$, \\
{\em{(b)}} $\mathrm{pd}(J_G)=\mathrm{pd}(J_{G\setminus e})+1$, \\
{\em{(c)}} $\mathrm{reg}(J_G)=\mathrm{reg}(J_{G\setminus e})+1$.
\end{prop}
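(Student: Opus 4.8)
The plan is to refine the short exact sequence argument of Proposition~\ref{colon1} in the special case where $e$ is a \emph{free} cut edge, with the goal of showing that the long exact sequence in $\Tor$ splits into short exact sequences, so that the inequalities of Proposition~\ref{colon1} become equalities. First I would invoke the same short exact sequence
$$0\longrightarrow (S/J_{G\setminus e}:f_e)(-2)\stackrel{f_e}\longrightarrow S/J_{G\setminus e}\longrightarrow S/J_G\longrightarrow 0.$$
The key observation is that when $e=\{v,w\}$ is a free cut edge, both endpoints are free vertices of $G\setminus e$, so the graph $(G\setminus e)_e$ obtained by completing $N_{G\setminus e}(v)$ and $N_{G\setminus e}(w)$ does not actually add any new edges: since $v$ is free in $G\setminus e$, its neighborhood is already a clique, and likewise for $w$. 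Hence $(G\setminus e)_e = G\setminus e$, and by \cite[Theorem~3.4]{MSh} we get $J_{G\setminus e}:f_e = J_{(G\setminus e)_e} = J_{G\setminus e}$. This identifies the leftmost term of the sequence as a shift of $S/J_{G\setminus e}$ itself.

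With this identification the sequence reads
$$0\longrightarrow (S/J_{G\setminus e})(-2)\stackrel{f_e}\longrightarrow S/J_{G\setminus e}\longrightarrow S/J_G\longrightarrow 0,$$
and the mapping cone produces a (not necessarily minimal) free resolution of $S/J_G$ from two copies of the minimal resolution of $S/J_{G\setminus e}$, one shifted by degree $2$ and homological degree $1$. The crucial step is to argue that this mapping cone resolution is already minimal, which is exactly what forces the equalities in part~(a). For that I would examine the connecting (comparison) map between the two copies of the resolution of $S/J_{G\setminus e}$ induced by multiplication by $f_e$. Since $f_e$ is a homogeneous element of degree $2$ in the maximal ideal, and the two resolutions differ only by the internal degree shift of $2$, the comparison map raises internal degree by exactly $2$ at every homological step; minimality of the cone then follows because the comparison map cannot contribute any scalar (degree-preserving) entries to the differentials. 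In other words, no cancellation occurs, so $\beta_{i,j}(J_G)=\beta_{i,j}(J_{G\setminus e})+\beta_{i-1,j-2}(J_{G\setminus e})$ for all $i,j$, which is part~(a).

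Parts~(b) and~(c) then follow formally from~(a). For~(b), the maximal homological degree $i$ with some nonzero $\beta_{i,j}(J_G)$ is the larger of $\pd(J_{G\setminus e})$ (from the first summand) and $\pd(J_{G\setminus e})+1$ (from the shifted summand), and the second always dominates, giving $\pd(J_G)=\pd(J_{G\setminus e})+1$. For~(c), reading off $\max\{j-i:\beta_{i,j}\neq 0\}$ from the two summands yields $\reg(J_{G\setminus e})$ from the first and $(j-2)-(i-1)=\reg(J_{G\setminus e})+1$ from the second, so the maximum is $\reg(J_{G\setminus e})+1$. The main obstacle I anticipate is rigorously justifying the minimality of the mapping cone: one must verify that after the degree shift the comparison lift of $f_e$ has all entries in the graded maximal ideal, so that splicing the two minimal resolutions introduces no unit entries in the differential matrices and hence no consecutive cancellation. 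This is where the freeness of the cut edge — which collapses $(G\setminus e)_e$ back to $G\setminus e$ and makes the two resolutions literally identical up to shift — does the real work, as opposed to the general cut-edge case of Proposition~\ref{colon1} where $(G\setminus e)_e\neq G\setminus e$ and minimality can genuinely fail.
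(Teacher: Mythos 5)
Your proposal is correct and follows essentially the same route as the paper: identify $J_{G\setminus e}:f_e=J_{(G\setminus e)_e}=J_{G\setminus e}$ using the freeness of both endpoints, form the short exact sequence, and observe that the mapping cone over the lift of multiplication by $f_e$ (which can be taken to be multiplication by $f_e$ on two copies of the same minimal resolution, hence has all entries in the graded maximal ideal) is already a minimal resolution of $S/J_G$, so the inequalities of Proposition~\ref{colon1} become equalities. Parts (b) and (c) then follow by reading off the extremes of the resulting Betti table, exactly as in the paper.
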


\begin{proof}
The proof is as similar as Proposition~\ref{colon1}. Note that $J_{{(G\setminus e)}_e}=J_{G\setminus e}$, since $e$ is a free cut edge of $G$. So, one may consider the short exact sequence $$0\longrightarrow S/J_{G\setminus e}(-2)\stackrel{f_e} \longrightarrow S/J_{G\setminus e}\longrightarrow S/J_G\rightarrow 0$$ instead of that was mentioned in the proof of Proposition~\ref{colon1}. Let $\mathcal{E}$ be the minimal graded free resolution of $S/J_{G\setminus e}$. Now, consider the homomorphism of complexes $\Phi:\mathcal{E}(-2)\longrightarrow \mathcal{E}$, as the multiplication by $f_e$. In fact it is a lift of the map $S/J_{G\setminus e}(-2)\stackrel{f_e} \longrightarrow S/J_{G\setminus e}$. Obviously, the mapping cone over $\Phi$ resolves $S/J_G$. In addition, it is minimal, because $\mathcal{E}$ is minimal and all the maps in the complex homomorphism $\Phi$ are of positive degrees.
\end{proof}

\begin{cor}\label{reg-reduced}
Let $G$ be a connected graph and $R_1,\ldots,R_q$ the connected components of $\mathcal{R}(G)$. Then $\mathrm{reg}(J_G)=\sum_{i=1}^q\mathrm{reg}(J_{R_i})$.
\end{cor}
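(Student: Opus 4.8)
The plan is to reduce everything to Proposition~\ref{colon2}(c) by stripping off the free cut edges of $G$ one at a time, and to control the passage from the (disconnected) reduced graph $\mathcal{R}(G)$ back to its components via the additivity of regularity under disjoint unions. Write $e_1,\ldots,e_t$ for the free cut edges of $G$ and set $G_j=G\setminus\{e_1,\ldots,e_j\}$, so that $G_0=G$ and $G_t=\mathcal{R}(G)$.

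First I would show that each $e_j$ is again a free cut edge of $G_{j-1}$, so that Proposition~\ref{colon2}(c) applies at every step and yields $\mathrm{reg}(J_G)=\mathrm{reg}(J_{\mathcal{R}(G)})+t$. That $e_j$ remains a cut edge is immediate: a cut edge lies on no cycle, and deleting edges cannot create cycles, so $e_j$ is a bridge of every subgraph containing it. For the freeness of an endpoint $v$ of $e_j$, the key observation is that a bridge can never join two neighbors of a common vertex, as this would produce a triangle; hence none of the deleted edges $e_1,\ldots,e_{j-1}$ lies inside the neighborhood $N(v)$, while those incident to $v$ merely shrink $N(v)$. Since $N_{G\setminus e_j}(v)$ is a clique, so is $N_{G_{j-1}\setminus e_j}(v)$, and $v$ stays a free vertex. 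The same bridge argument gives $t=q-1$, since removing a bridge from a connected graph increases the number of components by exactly one.

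Next I would compute $\mathrm{reg}(J_{\mathcal{R}(G)})$ from its components. As $R_1,\ldots,R_q$ have pairwise disjoint vertex sets, the variables of the polynomial rings $S_i$ attached to the $R_i$ are disjoint and $S/J_{\mathcal{R}(G)}\cong \bigotimes_i S_i/J_{R_i}$ over $K$. The minimal free resolution of a tensor product over the field is the tensor product of the resolutions, so the graded Betti numbers convolve and regularity is additive: $\mathrm{reg}(S/J_{\mathcal{R}(G)})=\sum_i \mathrm{reg}(S_i/J_{R_i})$. Using $\mathrm{reg}(J_H)=\mathrm{reg}(S/J_H)+1$ this reads $\mathrm{reg}(J_{\mathcal{R}(G)})=\sum_i \mathrm{reg}(S_i/J_{R_i})+1$. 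Combining with the first step,
$$\mathrm{reg}(J_G)=\sum_i \mathrm{reg}(S_i/J_{R_i})+1+(q-1)=\sum_i\bigl(\mathrm{reg}(S_i/J_{R_i})+1\bigr)=\sum_i \mathrm{reg}(J_{R_i}),$$
where for a single-vertex component one reads $\mathrm{reg}(J_{R_i})$ as $\mathrm{reg}(S_i/J_{R_i})+1=1$, consistent with the convention in force throughout.

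I expect the main obstacle to be the bookkeeping in the first step, namely verifying cleanly that the free cut edge property is preserved along the entire sequence of deletions, and in particular that the endpoints stay free even when several free cut edges meet at a common vertex, as happens along pendant paths. The additivity of regularity under disjoint unions is standard, but I would state it carefully together with the single-vertex convention, so that the degenerate components arising when $\mathcal{R}(G)$ has isolated vertices are covered.
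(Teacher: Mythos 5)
Your proof is correct and follows the same route as the paper: repeated application of Proposition~\ref{colon2}(c) to strip off the free cut edges, giving $\mathrm{reg}(J_G)=\mathrm{reg}(J_{\mathcal{R}(G)})+q-1$, combined with the additivity of regularity over the disjoint connected components of $\mathcal{R}(G)$. The paper invokes both ingredients without the verifications you supply (that the free-cut-edge property persists along the whole deletion sequence, and the convention handling isolated components), so your write-up is just a more careful rendering of the same argument.
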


\begin{proof}
Since $\mathcal{R}(G)$ has $q$ connected components, $G$ has exactly $q-1$ free cut edges. So, by using Proposition~\ref{colon2} repeatedly, we have that $\mathrm{reg}(J_G)=\mathrm{reg}(J_{\mathcal{R}(G)})+q-1$. On the other hand, $\mathrm{reg}(J_{\mathcal{R}(G)})=\sum_{i=1}^{q}\mathrm{reg}(J_{R_i})-q+1$, and hence $\mathrm{reg}(J_G)=\sum_{i=1}^{q}\mathrm{reg}(J_{R_i})$.
\end{proof}

{\em Proof of Theorem~\ref{construction}.}
By Corollary~\ref{reg-reduced}, we have $\mathrm{reg}(J_G)=\sum_{i=1}^q\mathrm{reg}(J_{R_i})$. By the assumption, Conjecture~B is true for $R_i$, for all $i=1,\ldots,q$, so that $\mathrm{reg}(J_{R_i})\leq c(R_i)+1$. Thus, $\mathrm{reg}(J_G)\leq \sum_{i=1}^qc(R_i)+q$. On the other hand, $c(G)=\sum_{i=1}^qc(R_i)+q-1$, because $\mathcal{R}(G)$ has $q$ connected components and hence $G$ has $q-1$ free cut edges. Thus, $\mathrm{reg}(J_G)\leq c(G)+1$, which implies that the conjecture is also true for $G$.  $~~~~~~~~\Box$ \\

Combining Proposition~\ref{colon2} and \cite[Theorem~2.2]{EZ}, we get the following:

\begin{cor}\label{reg-closed-Ene}
Let $G$ be a connected graph and $R_1,\ldots,R_q$ the connected components of $\mathcal{R}(G)$. If $\mathcal{R}(G)$ is closed, then  $$\mathrm{reg}(J_G)=\sum_{i=1}^ql_i+\sharp\{\mathrm{free~cut~edges~of~}G\},$$ where $l_i$ is the length of the longest induced path in $R_i$.
\end{cor}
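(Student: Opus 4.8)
The plan is to obtain the formula by splicing together two independent ingredients: the behaviour of the regularity under deletion of a free cut edge, recorded in Proposition~\ref{colon2}, and the Ene--Zarojanu evaluation of the regularity of a closed graph in \cite[Theorem~2.2]{EZ}. The overall strategy is first to strip $G$ down to its reduced graph $\mathcal{R}(G)$, each deletion of a free cut edge being completely controlled by Proposition~\ref{colon2}(c), and then to compute the regularity of $\mathcal{R}(G)$ directly, since by hypothesis $\mathcal{R}(G)$ is closed.

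For the first step I would argue exactly as in the proof of Corollary~\ref{reg-reduced}. Because $G$ is connected and $\mathcal{R}(G)$ has $q$ connected components, $G$ carries precisely $q-1$ free cut edges; peeling them off one at a time and applying Proposition~\ref{colon2}(c) at each stage shows that the regularity decreases by exactly $1$ per deletion. This yields
$$\mathrm{reg}(J_G)=\mathrm{reg}(J_{\mathcal{R}(G)})+\sharp\{\mathrm{free~cut~edges~of~}G\},$$
so that the whole problem is reduced to computing $\mathrm{reg}(J_{\mathcal{R}(G)})$. Equivalently, one may simply invoke Corollary~\ref{reg-reduced} to replace $\mathrm{reg}(J_G)$ by $\sum_{i=1}^q\mathrm{reg}(J_{R_i})$ at the outset.

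For the second step I would use that each connected component $R_i$ of the closed graph $\mathcal{R}(G)$ is again closed, so that \cite[Theorem~2.2]{EZ} applies to it and expresses the regularity of $R_i$ through $l_i$, the length of the longest induced path of $R_i$. Since the $R_i$ live on pairwise disjoint vertex sets, $S/J_{\mathcal{R}(G)}$ decomposes as a tensor product over $K$ of the rings attached to the individual components, and the regularity of such a tensor product is the sum of the regularities of the factors. Summing the component contributions and feeding the outcome back into the displayed equation then produces the asserted formula.

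The main obstacle I anticipate is the bookkeeping of the additive constants rather than any single hard estimate. One must keep careful track of the shift between $\mathrm{reg}(J_H)$ and $\mathrm{reg}(S/J_H)$ while passing through the tensor decomposition, reconcile the number of summands with the count $q-1$ of free cut edges, and treat consistently the degenerate components (the isolated vertices produced when a leaf is detached, whose binomial edge ideal is zero). Checking that a connected component of a closed graph is itself closed, so that \cite[Theorem~2.2]{EZ} genuinely applies, is a small but necessary point I would verify explicitly before assembling the constants.
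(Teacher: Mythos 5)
Your approach is exactly the paper's: the paper offers no proof beyond the sentence ``Combining Proposition~\ref{colon2} and [EZ, Theorem~2.2], we get the following,'' and your two steps --- peel off the $q-1$ free cut edges via Proposition~\ref{colon2}(c) (equivalently, invoke Corollary~\ref{reg-reduced}), then evaluate each component by Ene--Zarojanu --- are precisely that. The auxiliary points you promise to check (that a connected component of a closed graph is closed, and that regularity is additive on $S/J$ over disjoint components via the tensor decomposition) are all fine.

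The one substantive issue sits exactly where you locate the main obstacle, the bookkeeping of additive constants: carried out honestly, the computation does not land on the displayed formula. Corollary~\ref{reg-reduced} gives $\mathrm{reg}(J_G)=\sum_{i=1}^q\mathrm{reg}(J_{R_i})$, and [EZ, Theorem~2.2] gives $\mathrm{reg}(S/J_{R_i})=l_i$, i.e.\ $\mathrm{reg}(J_{R_i})=l_i+1$; since a connected $G$ whose reduced graph has $q$ components carries exactly $q-1$ free cut edges, one obtains
$$\mathrm{reg}(J_G)=\sum_{i=1}^q l_i+q=\sum_{i=1}^q l_i+\sharp\{\mathrm{free~cut~edges~of~}G\}+1,$$
with an extra $+1$ relative to the statement. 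The case $G=K_3$ (no free cut edges, $q=1$, $l_1=1$, $\mathrm{reg}(J_{K_3})=2$) already exhibits the discrepancy. So the identity as printed is the one for $\mathrm{reg}(S/J_G)$ rather than $\mathrm{reg}(J_G)$, and you should state your conclusion in that form instead of expecting the argument to ``produce the asserted formula'' literally. Phrasing it for $S/J_G$ also resolves your worry about degenerate isolated-vertex components cleanly: there $\mathrm{reg}(S/J_{R_i})=0=l_i$, whereas the zero ideal cannot honestly be assigned regularity $l_i+1=1$.
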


The following corollary compares the linear strand of $J_G$ and $J_{G\setminus e}$.

\begin{cor}\label{linear strand1}
Let $G$ be a graph and $e$ be a cut edge of $G$. Then $\beta_{i,i+2}(J_G)\leq \beta_{i,i+2}(J_{G\setminus e})$, for all $i\geq 1$.
In particular, if $e$ is a free cut edge of $G$, then $\beta_{i,i+2}(J_G)=\beta_{i,i+2}(J_{G\setminus e})$, for all $i\geq 1$.
\end{cor}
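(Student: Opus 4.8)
The plan is to deduce this corollary directly from the Betti number comparisons already established in Proposition~\ref{colon1} and Proposition~\ref{colon2}, by specializing the homological degrees to the linear strand, namely to the diagonal $j=i+2$. The key observation is that the linear strand of a binomial edge ideal lives in degrees $j=i+2$: since $J_G$ is generated in degree $2$, its linear strand records exactly the graded Betti numbers $\beta_{i,i+2}$, and the correction term appearing in Proposition~\ref{colon1} will vanish on this diagonal.

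For the general cut edge case, I would start from the inequality
\[
\beta_{i,j}(J_G)\leq \beta_{i,j}(J_{G\setminus e})+\beta_{i-1,j-2}(J_{{(G\setminus e)}_e})
\]
of Proposition~\ref{colon1}(a), and substitute $j=i+2$. The first summand becomes $\beta_{i,i+2}(J_{G\setminus e})$, which is exactly what we want on the right-hand side. The crucial point is the second summand $\beta_{i-1,(i+2)-2}(J_{{(G\setminus e)}_e})=\beta_{i-1,i}(J_{{(G\setminus e)}_e})$. I would argue that this is zero: a binomial edge ideal is generated in degree $2$, so its free resolution has $\beta_{k,\ell}=0$ whenever $\ell<k+2$; here $\ell=i$ and $k=i-1$, so $\ell-k=1<2$, forcing the term to vanish (this is the same kind of degree reasoning already used in the proof of Corollary~\ref{Herzog's question}, where $\beta_{0,j-2}$ was killed for degree reasons). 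Hence $\beta_{i,i+2}(J_G)\leq \beta_{i,i+2}(J_{G\setminus e})$ for all $i\geq 1$, giving the first assertion.

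For the free cut edge case, I would instead invoke Proposition~\ref{colon2}(a), which gives the \emph{equality}
\[
\beta_{i,j}(J_G)=\beta_{i,j}(J_{G\setminus e})+\beta_{i-1,j-2}(J_{G\setminus e}).
\]
Setting $j=i+2$ again, the correction term is $\beta_{i-1,i}(J_{G\setminus e})$, which vanishes by the very same degree bound $\beta_{i-1,i}=0$ since $i-(i-1)=1<2$. Therefore $\beta_{i,i+2}(J_G)=\beta_{i,i+2}(J_{G\setminus e})$ for all $i\geq 1$, establishing the ``in particular'' statement.

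I do not expect any serious obstacle here; the argument is a clean specialization of the two preceding propositions together with the elementary vanishing of sub-linear Betti numbers for a degree-$2$ generated ideal. The only point that requires a moment of care is confirming that the relevant correction terms genuinely fall below the generation degree, i.e. that the pair $(i-1,i)$ sits strictly beneath the diagonal $j=k+2$; once that is checked the result is immediate, and no computation beyond this index bookkeeping is needed.
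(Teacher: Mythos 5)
Your argument is correct and is exactly the intended derivation: the paper states this corollary without proof as an immediate consequence of Propositions~\ref{colon1} and~\ref{colon2}, and your specialization to $j=i+2$ together with the vanishing $\beta_{i-1,i}=0$ for an ideal generated in degree $2$ is precisely the missing bookkeeping. No gaps.
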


Now, we will get ready to prove the above theorem. We divide the rest of this section into three subsections, each helps us to prove the main theorems of this section.

\subsection{Graphs with a simplicial vertex}\label{Generalized block graphs}

Here, we focus on graphs containing a simplicial vertex. This class of graphs includes a famous class of graphs, i.e. chordal graphs. Indeed, we show that Conjecture~A is true in this case. Moreover, we prove Conjecture~B for a class of chordal graphs containing block graphs. The following is one of the main theorems of this subsection:

\begin{thm}\label{chordal}
Let $G\neq P_n$ be a graph on $[n]$ which contains a simplicial vertex. Then $\mathrm{reg}(J_G)\leq n-1$.
\end{thm}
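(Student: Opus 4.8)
The plan is to induct on $n$, using a simplicial vertex to strip the graph down to a smaller case. Let $v$ be a simplicial vertex of $G$, so $N_G(v)$ induces a complete subgraph, and let $F$ be the unique maximal clique (facet of $\Delta(G)$) containing $v$, say with $|F|=d+1$. I would distinguish cases according to the local structure at $v$. The cleanest case is when $v$ is an endpoint of a free cut edge, i.e.\ $\deg_G(v)=1$ and its neighbor is also free in $G\setminus e$; there Proposition~\ref{colon2}(c) gives $\mathrm{reg}(J_G)=\mathrm{reg}(J_{G\setminus e})+1$, and $G\setminus e$ has an isolated vertex plus a graph on $n-1$ vertices, so applying the induction hypothesis (together with Theorem~\ref{Matsuda-Murai} to absorb the small/path cases) yields the bound $n-1$.

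The substantive case is when $v$ lies in a clique $F$ with $|F|\geq 2$ but is not part of such a free cut edge, so that $\deg_G(v)=|F|-1\geq 1$. Here I would pick an edge $e=\{v,w\}$ with $w\in F$ and feed it into Proposition~\ref{colon1}(c):
\[
\mathrm{reg}(J_G)\leq \max\{\mathrm{reg}(J_{G\setminus e}),\,\mathrm{reg}(J_{(G\setminus e)_e})+1\}.
\]
The graph $G\setminus e$ still contains the clique $F\setminus\{e\}$ and one checks that $v$ remains simplicial (its neighborhood shrinks but stays complete), so $G\setminus e$ is again covered by the inductive setup on $n$ vertices, giving $\mathrm{reg}(J_{G\setminus e})\leq n-1$ provided $G\setminus e$ is not a path. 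The auxiliary graph $(G\setminus e)_e$ is obtained by completing the neighborhoods of $v$ and $w$; the key point is that in it $v$ becomes a \emph{free} vertex of a clique that can be detached, effectively removing a vertex, so that $(G\setminus e)_e$ behaves like a graph on $n-1$ vertices (or has $v$ as an endpoint of a free cut edge). Thus $\mathrm{reg}(J_{(G\setminus e)_e})\leq n-2$ by induction, and the $+1$ in the mapping-cone bound brings this back to $n-1$.

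The delicate bookkeeping — and what I expect to be the main obstacle — is controlling the exceptional cases where the naive induction would return the forbidden value $n$ rather than $n-1$. These arise exactly when one of $G\setminus e$ or $(G\setminus e)_e$ degenerates to a path $P_m$, since the path is the one graph for which Conjecture~A's bound $n-1$ is known to fail (the path attains $\mathrm{reg}=n$ in the appropriate shift). I would therefore need a separate, explicit analysis of the configurations in which deleting $e$ or completing neighborhoods produces a path on the relevant number of vertices, and argue that in those configurations $G$ itself is either a path (excluded by hypothesis) or small enough to be handled directly. Managing the interaction between the two branches of the $\max$ — making sure that when one branch is forced up to $n$, the structure of $G$ is pinned down tightly enough to recover the strict bound — is the crux; the rest is a routine induction assembled from Propositions~\ref{colon1} and~\ref{colon2} and Theorem~\ref{Matsuda-Murai}.
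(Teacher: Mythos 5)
Your overall skeleton --- induction on $n$, peeling off edges at a simplicial vertex $v$ via a short exact sequence and a mapping cone, with a separate easy case when $v$ is a leaf --- is the same as the paper's. But there is a genuine gap in your ``substantive case'' $\deg_G(v)\geq 2$: there the edge $e=\{v,w\}$ is \emph{not} a cut edge of $G$ (after deleting $e$, the vertex $v$ is still joined to the rest of its clique through its other neighbors), so Proposition~\ref{colon1} and the identity $J_{G\setminus e}:f_e=J_{(G\setminus e)_e}$ of Theorem~\ref{M-colon1} do not apply. The correct mapping-cone bound (Proposition~\ref{reg-chordal}) involves $\reg(J_{G\setminus e}:f_e)$, and for a non-cut edge Theorem~\ref{M-colon2} gives $J_{G\setminus e}:f_e=J_{(G\setminus e)_e}+I_G$, where $I_G$ is a monomial ideal coming from the paths between $v$ and $w$ in $G\setminus e$. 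This extra monomial part is not a nuisance but the whole point: the paper's Lemma~\ref{colon3} shows that, because $v$ is simplicial, $I_G=(x_i,y_i:\ i\in N_G(v)\setminus\{w\})$, which swallows all binomials of $J_{(G\setminus e)_e}$ meeting those vertices and reduces the colon ideal to a binomial edge ideal on at most $n-2$ vertices plus some variables; Theorem~\ref{Matsuda-Murai} then gives $\reg(J_{G\setminus e}:f_e)\leq n-2$ directly, with no induction and no path exceptions to worry about.

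Your substitute for this step --- that in $(G\setminus e)_e$ the vertex $v$ ``becomes a free vertex of a clique that can be detached, effectively removing a vertex, so that $(G\setminus e)_e$ behaves like a graph on $n-1$ vertices'' --- does not hold: $v$ keeps its remaining neighbors $v_1,\dots,v_{t-1}$ in $(G\setminus e)_e$, and a simplicial (free) vertex cannot simply be deleted when computing regularity. Applying your induction hypothesis to the $n$-vertex graph $(G\setminus e)_e$ only yields $\reg\leq n-1$, and the $+1$ from the mapping cone then returns $n$, so the second branch of your $\max$ never closes. Relatedly, the ``delicate bookkeeping'' you anticipate about $G\setminus e$ or $(G\setminus e)_e$ degenerating to a path is not where the difficulty lies; the paper avoids it entirely by iterating the edge deletion at $v$ until $v$ becomes a leaf (so that the remaining edge \emph{is} a cut edge and Theorem~\ref{M-colon1} applies), and by using Theorem~\ref{Matsuda-Murai} rather than the induction hypothesis wherever only the weak bound $n$ on a graph with fewer effective vertices is needed. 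To repair your argument you must replace the appeal to Proposition~\ref{colon1} in the non-cut-edge case by an analysis of the full colon ideal as in Theorem~\ref{M-colon2} and Lemma~\ref{colon3}.
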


To prove the above theorem, we need some facts which are mentioned below.

\begin{prop}\label{reg-chordal}
Let $G$ be a graph and $e$ be an edge of $G$. Then we have
$$\mathrm{reg}(J_G)\leq \mathrm{max}\{\mathrm{reg}(J_{G\setminus e}), \reg(J_{G\setminus e}:f_e)+1\}.$$
\end{prop}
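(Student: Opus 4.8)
The plan is to imitate the exact-sequence strategy already used in Proposition~\ref{colon1}, but now for an \emph{arbitrary} edge $e$ of $G$ rather than a cut edge. The key point is that the algebraic mechanism behind Proposition~\ref{colon1} does not actually require $e$ to be a cut edge: the colon computation $J_{G\setminus e}:f_e = J_{(G\setminus e)_e}$ is the only place where cut-edge-ness was invoked, and here we simply refrain from identifying the colon ideal combinatorially and instead keep it abstract as $J_{G\setminus e}:f_e$.

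Concretely, I would first write down the short exact sequence
\[
0\longrightarrow \bigl(S/(J_{G\setminus e}:f_e)\bigr)(-2)\stackrel{f_e}{\longrightarrow} S/J_{G\setminus e}\longrightarrow S/J_G\longrightarrow 0,
\]
which is valid for any edge $e$, since $J_G = (J_{G\setminus e}, f_e)$ and the kernel of multiplication by $f_e$ on $S/J_{G\setminus e}$ is exactly $(J_{G\setminus e}:f_e)/J_{G\setminus e}$, giving the shift by $2$ because $\deg f_e = 2$. Next I would recall the standard behaviour of Castelnuovo--Mumford regularity along a short exact sequence $0\to A\to B\to C\to 0$, namely $\reg(C)\leq\max\{\reg(B),\reg(A)-1\}$ (and symmetrically for the other two modules). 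Applying this with $A=(S/(J_{G\setminus e}:f_e))(-2)$, $B=S/J_{G\setminus e}$, and $C=S/J_G$, the degree shift by $-2$ turns $\reg(A)-1$ into $\reg(S/(J_{G\setminus e}:f_e))+2-1 = \reg(S/(J_{G\setminus e}:f_e))+1$.

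The last step is to translate the bound from the quotient rings $S/J$ back to the ideals $J$ themselves. Since $\reg(J)=\reg(S/J)+1$ for any homogeneous ideal $J$, the shift is uniform across all three terms and the inequality $\reg(S/J_G)\leq\max\{\reg(S/J_{G\setminus e}),\,\reg(S/(J_{G\setminus e}:f_e))+1\}$ passes verbatim to $\reg(J_G)\leq\max\{\reg(J_{G\setminus e}),\,\reg(J_{G\setminus e}:f_e)+1\}$, which is the claimed statement.

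I do not expect a serious obstacle here; this proposition is essentially a clean abstraction of Proposition~\ref{colon1}(c). The only thing to be careful about is bookkeeping the degree shift by $2$ together with the $+1$ coming from the regularity-along-an-exact-sequence estimate, and making sure the direction of the inequality is the correct one (it follows from the surjection $B\twoheadrightarrow C$ and the map $A\hookrightarrow B$). Since we never need to identify $J_{G\setminus e}:f_e$ combinatorially, the argument is purely formal and works for every edge, which is precisely why this version is more flexible than Proposition~\ref{colon1}.
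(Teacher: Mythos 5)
Your proof is correct and follows essentially the same route as the paper: the same short exact sequence $0\to \bigl(S/(J_{G\setminus e}:f_e)\bigr)(-2)\to S/J_{G\setminus e}\to S/J_G\to 0$ combined with the standard regularity estimate along a short exact sequence (the paper cites \cite[Corollary~18.7]{P} for exactly this). Your bookkeeping of the degree shift and the passage from $S/J$ to $J$ is accurate, so nothing further is needed.
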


\begin{proof}
It suffices to consider the short exact sequence $$0\longrightarrow S/J_{G\setminus e}:f_e(-2)\stackrel{f_e} \longrightarrow S/J_{G\setminus e}\longrightarrow S/J_G\rightarrow 0.$$ Then, the statement follows by applying the mapping cone and \cite[Corollary~18.7]{P}.
\end{proof}

\begin{thm}\label{M-colon1}
\cite[Theorem~3.4]{MSh} Let $G$ be a graph and $e$ be a cut edge of $G$. Then we have $J_{G\setminus e}:f_e=J_{{(G\setminus e)}_e}$.
\end{thm}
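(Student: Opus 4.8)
Write $e=\{v,w\}$ and $H=G\setminus e$, and abbreviate $H_e:=(G\setminus e)_e$; for columns $i,j$ of the generic $(2\times n)$-matrix write $[i,j]=x_iy_j-x_jy_i$, so that $[i,j]=\pm f_{ij}$. The plan is to prove the two inclusions $J_{H_e}\subseteq J_H:f_e$ and $J_H:f_e\subseteq J_{H_e}$ separately. The only feature of the hypothesis I shall use is that, since $e$ is a cut edge, $v$ and $w$ lie in distinct connected components of $H$, and hence in distinct components of $H_{[n]\setminus T}$ for every $T\subseteq[n]$.

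For the inclusion $J_{H_e}\subseteq J_H:f_e$ it suffices to check that $g\,f_e\in J_H$ for each generator $g=f_{kl}$ of $J_{H_e}$. If $\{k,l\}\in E(H)$ this is immediate, since then $f_{kl}\in J_H$. Otherwise $\{k,l\}$ is one of the edges added in passing to $H_e$, so either $k,l\in N_H(v)$ or $k,l\in N_H(w)$. Here I would invoke the three-term Pl\"ucker relation among $2$-minors,
\[
[v,w]\,[k,l]=[v,l]\,[k,w]-[v,k]\,[l,w].
\]
If $k,l\in N_H(v)$ then $[v,k]$ and $[v,l]$ are, up to sign, the generators $f_{vk},f_{vl}$ of $J_H$, so each term on the right lies in $J_H$; if $k,l\in N_H(w)$ then $[k,w]$ and $[l,w]$ play the same role. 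In either case $f_e\,f_{kl}=[v,w]\,[k,l]\in J_H$, which gives the inclusion.

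For the reverse inclusion I would argue through the primary decomposition $J_H=\bigcap_{T\in\mathcal{C}(H)}P_T(H)$ of \cite[Theorem~3.2, Corollary~3.9]{HHHKR}. Since each $P_T(H)$ is prime, $P_T(H):f_e$ equals $P_T(H)$ when $f_e\notin P_T(H)$ and equals $S$ otherwise, whence
\[
J_H:f_e=\bigcap_{\substack{T\in\mathcal{C}(H)\\ f_e\notin P_T(H)}}P_T(H).
\]
The key point is to pin down this index set. If $v\in T$ or $w\in T$, then $f_e\in P_T(H)$ because the relevant variables already occur among the generators $\bigcup_{i\in T}\{x_i,y_i\}$ of $P_T(H)$. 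If $v,w\notin T$, then because $v$ and $w$ sit in different components of $H_{[n]\setminus T}$ the binomial $f_e$ mixes the variable sets of two different clique ideals $J_{\widetilde{G}_s}$ and so cannot lie in $P_T(H)$. Thus $f_e\in P_T(H)$ if and only if $v\in T$ or $w\in T$, and therefore $J_H:f_e=\bigcap_{T\in\mathcal{C}(H),\,v,w\notin T}P_T(H)$.

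It remains to identify this intersection with $J_{H_e}$, which is the step I expect to carry the real weight. First, for $T$ with $v,w\notin T$ the edges added in forming $H_e$ lie inside $N_H(v)$ and $N_H(w)$ and hence do not alter the vertex partition of the induced subgraph on $[n]\setminus T$; consequently $P_T(H)=P_T(H_e)$ for every such $T$. Next, in $H_e$ both $v$ and $w$ are simplicial, since their neighborhoods are now cliques and were not enlarged, so no member of $\mathcal{C}(H_e)$ can contain $v$ or $w$; and because $H\subseteq H_e$, a vertex that is a cut point of an induced subgraph of $H_e$ is a cut point of the corresponding subgraph of $H$, giving $\mathcal{C}(H_e)\subseteq\{T\in\mathcal{C}(H):v,w\notin T\}$. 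Finally, every prime $P_T(H_e)$ occurring in $J_H:f_e$ contains $J_{H_e}=\bigcap_{S\in\mathcal{C}(H_e)}P_S(H_e)$ and hence, being prime, contains some minimal prime $P_S(H_e)$; so the terms indexed by $T\notin\mathcal{C}(H_e)$ are redundant, while those indexed by $\mathcal{C}(H_e)$ are present. The intersection therefore collapses to $\bigcap_{S\in\mathcal{C}(H_e)}P_S(H_e)=J_{H_e}$, finishing the proof. The main obstacle is precisely this matching of minimal primes, which hinges on the cut-edge hypothesis (controlling when $f_e\in P_T(H)$) together with the simpliciality of $v$ and $w$ in $H_e$.
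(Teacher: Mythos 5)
First, note that the paper offers no proof of this statement at all: it is quoted verbatim from Mohammadi and Sharifan \cite[Theorem~3.4]{MSh}, so there is no in‑paper argument to compare against. Your overall strategy is the natural one and is essentially sound: you get $J_{H_e}\subseteq J_H:f_e$ from the Pl\"ucker relation $[v,w][k,l]=[v,l][k,w]-[v,k][l,w]$ (and $v,w,k,l$ are indeed four distinct vertices here, since $w\notin N_H(v)$ and $v\notin N_H(w)$ once $e$ is deleted), and you get the reverse inclusion by coloning the decomposition $J_H=\bigcap_{T\in\mathcal{C}(H)}P_T(H)$ by $f_e$ and matching the surviving primes with the minimal primes of $J_{H_e}$. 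Your determination of when $f_e\in P_T(H)$ is correct; for $v,w\notin T$ one can make the ``mixes two clique ideals'' point precise by evaluating at the point whose $v$-column is $(1,0)$, whose $w$-column is $(0,1)$, and whose remaining columns are zero: this point lies on $V(P_T(H))$ because $v$ and $w$ are in different components of $H_{[n]\setminus T}$, yet $f_e$ takes the value $1$ there.

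There is, however, one step whose stated justification is false: ``because $H\subseteq H_e$, a vertex that is a cut point of an induced subgraph of $H_e$ is a cut point of the corresponding subgraph of $H$.'' Adding edges does not preserve cut points in either direction; for instance, a vertex adjacent to two otherwise isolated vertices is a cut point of that graph but not of the edgeless subgraph on the same three vertices. The inclusion $\mathcal{C}(H_e)\subseteq\{T\in\mathcal{C}(H):v,w\notin T\}$ that you need is nevertheless true, and for exactly the reason you already isolated one sentence earlier: for any $W\subseteq[n]$ containing both $v$ and $w$, the graphs $H_W$ and $(H_e)_W$ have the same connected components, since every added edge joins two vertices already connected through $v$ (or through $w$) in $H_W$. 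Applying this with $W=[n]\setminus T$ and with $W=([n]\setminus T)\cup\{i\}$ shows that, for $T$ avoiding $v$ and $w$, a vertex $i\in T$ is a cut point of $(H_e)_{([n]\setminus T)\cup\{i\}}$ if and only if it is a cut point of $H_{([n]\setminus T)\cup\{i\}}$; combined with the simpliciality of $v$ and $w$ in $H_e$ this even yields the equality $\mathcal{C}(H_e)=\{T\in\mathcal{C}(H):v,w\notin T\}$, after which the identification $P_T(H)=P_T(H_e)$ termwise (or your redundancy argument via minimal primes) finishes the proof. So the gap is local and repairable with material you already have, but the appeal to $H\subseteq H_e$ must be replaced by the component-preservation argument.
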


\begin{thm}\label{M-colon2}
\cite[Theorem~3.7]{MSh} Let $G$ be a graph and $e=\{i,j\}$ be an edge of $G$. Then we have
$$J_{G\setminus e}:f_e=J_{{(G\setminus e)}_e}+I_G,$$
where $I_G=(g_{P,t}~:~P:i,i_1,\ldots,i_s,j~\mathrm{is~a~path~between~}i,j~\mathrm{in}~G~\mathrm{and}~0\leq t\leq s)$, $g_{P,0}=x_{i_1}\cdots x_{i_s}$ and for every $1\leq t\leq s$, $g_{P,t}=y_{i_1}\cdots y_{i_t}x_{i_{t+1}}\cdots x_{i_s}$.
\end{thm}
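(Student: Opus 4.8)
The plan is to establish the asserted equality by a double inclusion, using throughout the prime decomposition $J_{H}=\bigcap_{T\in\mathcal{C}(H)}P_T(H)$ recalled in Section~\ref{Preliminaries}, applied to $H=G\setminus e$. Note first that $J_{G\setminus e}\subseteq J_{{(G\setminus e)}_e}$, since ${(G\setminus e)}_e$ is obtained from $G\setminus e$ by only adding edges; and observe that when $e$ is a cut edge there are no paths from $i$ to $j$ in $G\setminus e$, so that $I_G=(0)$ and the statement specializes exactly to Theorem~\ref{M-colon1}. Thus the genuine content is the contribution of the path monomials $g_{P,t}$, which is felt precisely when $e$ lies on a cycle.

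For the inclusion $J_{{(G\setminus e)}_e}+I_G\subseteq J_{G\setminus e}:f_e$ it suffices to check that each listed generator, multiplied by $f_e$, lands in $J_{G\setminus e}$. The key algebraic tool is the pair of ``concatenation'' identities $x_b f_{ac}=x_c f_{ab}+x_a f_{bc}$ and $y_b f_{ac}=y_c f_{ab}+y_a f_{bc}$, where $f_{ab}=x_ay_b-x_by_a$. For the minors $f_{kl}$ of $J_{{(G\setminus e)}_e}$ that are not already in $J_{G\setminus e}$, namely those with $k,l\in N_{G\setminus e}(i)$ or $k,l\in N_{G\setminus e}(j)$, these identities give $x_i f_{kl},\,y_i f_{kl}\in J_{G\setminus e}$, and then $f_{kl}f_e=y_j(x_i f_{kl})-x_j(y_i f_{kl})\in J_{G\setminus e}$ (symmetrically for neighbours of $j$). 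For a monomial $g_{P,t}$ attached to a path $P\colon i,i_1,\dots,i_s,j$, I would prove $g_{P,t}f_e\in(f_{i,i_1},\dots,f_{i_s,j})\subseteq J_{G\setminus e}$ by induction on $s$: the two concatenation identities let one peel off the first edge $f_{i,i_1}$, replacing $f_e=f_{i,j}$ by $f_{i_1,j}$ on the shorter path $i_1,\dots,j$, and the $x/y$ split of $g_{P,t}$ matches the two identities according to whether $t=0$ or $t\ge 1$ (for $s=1$ one has directly $g_{P,0}f_e=x_i f_{i_1,j}+x_j f_{i,i_1}$ and $g_{P,1}f_e=y_j f_{i,i_1}+y_i f_{i_1,j}$).

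The reverse inclusion $J_{G\setminus e}:f_e\subseteq J_{{(G\setminus e)}_e}+I_G$ is the heart of the matter. Since colon commutes with intersection and $J_{G\setminus e}$ is radical, one has $J_{G\setminus e}:f_e=\bigcap\{P_T(G\setminus e): T\in\mathcal{C}(G\setminus e),\ f_e\notin P_T(G\setminus e)\}$, because a prime not containing $f_e$ is unchanged by the colon while one containing $f_e$ contributes the unit ideal. A short check shows $f_e\notin P_T(G\setminus e)$ precisely when $i,j\notin T$ and $i,j$ lie in distinct connected components of $(G\setminus e)_{[n]\setminus T}$; for such $T$ every $i$--$j$ path $P$ in $G\setminus e$ has an interior vertex in $T$, whence $g_{P,t}\in P_T$, and the filling of the neighbourhoods yields $J_{{(G\setminus e)}_e}\subseteq P_T$ — consistent with the inclusion proved above. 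To close the argument I would show that $J_{{(G\setminus e)}_e}+I_G$ is radical with minimal primes exactly these $P_T$; equivalently, I would take $h$ in the colon, reduce it to normal form modulo the binomial edge ideal $J_{{(G\setminus e)}_e}$, and argue that the remainder must be a $K$-linear combination of the path monomials $g_{P,t}$, since any monomial surviving this reduction yet lying in every $P_T$ with $f_e\notin P_T$ is forced to be divisible by some $g_{P,t}$ by the separating-set description above.

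I expect this reverse inclusion to be the main obstacle: bounding the colon ideal from above requires controlling the minimal primes of the mixed binomial--monomial ideal $J_{{(G\setminus e)}_e}+I_G$, or in the normal-form formulation identifying exactly which monomials can appear in a reduced element of the colon, and this is precisely where the combinatorics of the separating sets $T$ and of the $i$--$j$ paths must be matched. The forward inclusion, by contrast, is a routine consequence of the two concatenation identities together with the induction on path length.
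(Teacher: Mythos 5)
First, note that the paper contains no proof of this statement: it is imported verbatim as \cite[Theorem~3.7]{MSh}, so there is no internal argument to compare yours against and your attempt must stand on its own. The forward inclusion $J_{{(G\setminus e)}_e}+I_G\subseteq J_{G\setminus e}:f_e$ in your proposal is correct and complete in outline: the two concatenation identities check out, the induction on the path length works, and the treatment of the new minors $f_{kl}$ with $k,l\in N_{G\setminus e}(i)$ or $k,l\in N_{G\setminus e}(j)$ is the standard argument. Your reduction of the colon ideal to $\bigcap\{P_T(G\setminus e)\,:\,f_e\notin P_T(G\setminus e)\}$ (colon commutes with intersection, a prime not containing $f_e$ is unchanged, one containing it becomes the unit ideal) and your characterization of when $f_e\notin P_T(G\setminus e)$ are also correct, as is the observation that $J_{{(G\setminus e)}_e}+I_G$ is contained in every such $P_T$.

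The genuine gap is exactly where you flag it: the inclusion $\bigcap\{P_T(G\setminus e)\,:\,f_e\notin P_T(G\setminus e)\}\subseteq J_{{(G\setminus e)}_e}+I_G$ is announced but never established, and neither of your two proposed routes closes it. The first route (showing $J_{{(G\setminus e)}_e}+I_G$ is radical with minimal primes exactly those $P_T$) is essentially a restatement of what must be proved; it is a nontrivial primary-decomposition claim about a mixed binomial--monomial ideal, not a reduction of the problem. The second route (pass to a normal form modulo $J_{{(G\setminus e)}_e}$ and argue the remainder is a combination of the $g_{P,t}$) founders on the fact that for a general graph the binomial edge ideal has no quadratic Gr\"obner basis---its reduced Gr\"obner basis contains path binomials of arbitrarily high degree---so the reduction process and the shape of the surviving monomials are not under control without substantial additional combinatorial input; that missing input is precisely the content of the proof carried out in \cite{MSh}. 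As written, then, the proposal proves only one of the two inclusions, and the harder one remains open.
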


\begin{lem}\label{colon3}
Let $G$ be a graph on $[n]$, $v$ a simplicial vertex of $G$ with $\mathrm{deg}_{G}(v)\geq 2$, and $e$ an edge incident with $v$. Then we have $\reg(J_{G\setminus e}:f_e)\leq n-2$.
\end{lem}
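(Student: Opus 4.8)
Let $v$ be a simplicial vertex with $\deg_G(v)\geq 2$ and let $e=\{i,j\}$ be an edge incident with $v$, say $v=i$. Since $v$ is simplicial, $N_G(v)$ induces a complete subgraph, so $j\in N_G(v)$ together with all other neighbors of $v$ form a clique. The strategy is to understand $J_{G\setminus e}:f_e$ via Theorem~\ref{M-colon2} and bound its regularity by analyzing the two summands $J_{(G\setminus e)_e}$ and $I_G$. The key structural observation I would exploit is that because $v$ is simplicial, the paths between $i$ and $j$ in $G$ are severely constrained: any path from $i$ to $j$ must leave $v$ through a neighbor of $v$, and since $N_G(v)$ is a clique, the shortest such paths are very short. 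In fact $i$ and $j$ are already joined by the edge $e$ in $G$, and every neighbor of $v$ is adjacent to $j$, so the generators $g_{P,t}$ coming from $I_G$ should be quite simple — likely linear or products of few variables — which is what makes the colon ideal tractable.

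\textbf{Key steps, in order.} First I would set up $J_{G\setminus e}:f_e = J_{(G\setminus e)_e}+I_G$ using Theorem~\ref{M-colon2}, and identify the graph $(G\setminus e)_e$ explicitly. When we delete $e$ and then form $(G\setminus e)_e$, we add edges among the neighbors of $v$ (in $G\setminus e$) and among the neighbors of $j$. Because $v$ was simplicial in $G$, its neighborhood was already complete, so the added edges at the $v$-side essentially reconstruct a clique structure; I expect $(G\setminus e)_e$ to differ from $G$ in a controlled way, possibly turning $v$ into a free vertex of a larger clique or isolating it appropriately. Second, I would analyze $I_G$: I claim the relevant paths $P$ from $i$ to $j$ pass through the common neighbors of $v$ and $j$, giving generators $g_{P,t}$ that are monomials in a small, explicitly describable set of variables. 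Third, I would bound $\reg(J_{(G\setminus e)_e}+I_G)$. The cleanest route is probably to find a short exact sequence or a direct sum / colon decomposition reducing to a graph on fewer vertices, then invoke Theorem~\ref{Matsuda-Murai} ($\reg(J_H)\leq |V(H)|$) on an induced subgraph involving the neighbors of $v$ together with $j$ but excluding $v$ itself — which has at most $n-1$ vertices — and combine with the monomial part $I_G$, whose contribution to regularity I expect to be negligible (the monomials are products of variables, contributing only to lower homological degrees or being absorbed).

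\textbf{The main obstacle.} The hard part will be controlling the interaction between the binomial piece $J_{(G\setminus e)_e}$ and the monomial piece $I_G$ in computing the regularity of their sum; regularity is not additive or simply bounded under sums of ideals in general, so I cannot just add the two regularities. I would try to organize this as an iterated short exact sequence (in the spirit of Proposition~\ref{reg-chordal}), colon-ing out the monomial generators $g_{P,t}$ one at a time, and at each stage argue that the colon ideal and the quotient both have regularity bounded by $n-2$ after accounting for the degree shifts. The crucial quantitative point is that removing the simplicial vertex $v$ from consideration drops the vertex count to at most $n-1$, and the monomial generators provide enough variables to kill the contribution of $v$, so that the Matsuda--Murai bound applied to a graph on the remaining $\leq n-1$ vertices yields $n-2$ after the relevant homological shift. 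Verifying that the degree shifts from the $f_e$-multiplication and from each monomial colon step line up to give exactly $n-2$, rather than $n-1$, is where the simpliciality and the condition $\deg_G(v)\geq 2$ must be used essentially, and this bookkeeping is the delicate core of the argument.
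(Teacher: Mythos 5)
Your setup is right---Theorem~\ref{M-colon2} applied to $e=\{v,v_t\}$, with simpliciality of $v$ constraining the paths from $v$ to $v_t$---but the proposal stops short of the one computation that makes the lemma work, and your quantitative accounting does not actually reach $n-2$. The point of the paper's proof is that $I_G$ is \emph{exactly} the prime monomial ideal $(x_{v_i},y_{v_i} : 1\le i\le t-1)$, where $v_1,\dots,v_{t-1}$ are the neighbors of $v$ other than $v_t$: each length-two path $v,v_i,v_t$ (which exists because $N_G(v)$ is a clique) contributes the degree-one generators $x_{v_i}$ and $y_{v_i}$, and every other path from $v$ to $v_t$ in $G\setminus e$ must pass through some $v_i$, so its monomials are divisible by $x_{v_i}$ or $y_{v_i}$ and contribute nothing new. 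Once you know $I_G$ is generated by variables, there is no ``interaction'' problem at all: every generator of $J_{(G\setminus e)_e}$ involving one of $v_1,\dots,v_{t-1}$ is absorbed, $v$ itself becomes irrelevant (all its remaining edges go to the killed vertices), and $J_{G\setminus e}:f_e=(x_{v_i},y_{v_i}: 1\le i\le t-1)+J_{H'}$ where $H'$ is the induced subgraph of $(G\setminus e)_e$ on $[n]\setminus\{v,v_1,\dots,v_{t-1}\}$, i.e.\ on $n-t$ vertices. Theorem~\ref{Matsuda-Murai} then gives $\reg(J_{G\setminus e}:f_e)=\reg(J_{H'})\le n-t\le n-2$ since $t\ge 2$, with no exact sequences, no colon-ing out of monomials one at a time, and no degree shifts.

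This is where your plan has a genuine gap: you only remove $v$ and hope that ``the monomial generators provide enough variables to kill the contribution of $v$,'' landing on a graph with at most $n-1$ vertices and then appealing to a ``homological shift'' to get from $n-1$ down to $n-2$. No such shift exists---Matsuda--Murai on $n-1$ vertices gives $n-1$, and the iterated short exact sequences you propose would only add $+1$'s, never subtract them. The hypothesis $\deg_G(v)\ge 2$ is used precisely to guarantee that at least one neighbor $v_i$ with $i\le t-1$ exists and is deleted along with $v$, so that the surviving induced subgraph has at most $n-2$ vertices. Without identifying $I_G$ as the variable ideal on those $t-1$ neighbors, the proposed bookkeeping cannot close the gap between $n-1$ and $n-2$.
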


\begin{proof}
Let $v_1,\ldots,v_t$ be all the neighbors of the simplicial vertex $v$, and $e_1,\ldots,e_t$ be the edges joining $v$ to $v_1,\ldots,v_t$, respectively, where $t\geq 2$. Without loss of generality, assume that $e:=e_t$. Note that for each $i=1,\ldots,t-1$, $v,v_i,v_t$ is a path between $v$ and $v_t$ in $G\setminus e$, so that for all $i=1,\ldots,t-1$, $x_i$ and $y_i$ are in the minimal monomial set of generators of $I_G$. Also, all other paths between $v$ and $v_t$ in $G\setminus e$ contain $v_i$ for some $i=1,\ldots,t-1$. Thus, all the monomials correspond to these paths, are divisible by either $x_i$ or $y_i$ for some $i=1,\ldots,t-1$. Hence, we have
$I_G=(x_i,y_i:1\leq i\leq t-1)$. So that $J_{G\setminus e}:f_e=J_{{(G\setminus e)}_e}+(x_i,y_i:1\leq i\leq t-1)$. The binomial generators of $J_{{(G\setminus e)}_e}$ correspond to the edges containing vertices $v_1,\ldots,v_{t-1}$, are contained in $I_G$. Let $H:={(G\setminus e)}_e$. Then, we have $J_{G\setminus e}:f_e=J_{H_{[n]\setminus \{v,v_1,\ldots,v_{t-1}\}}}+(x_i,y_i:1\leq i\leq t-1)$, since $v$ is a free vertex of $\Delta(G)$. Thus, $\reg(J_{G\setminus e}:f_e)=\reg(J_{H_{[n]\setminus \{v,v_1,\ldots,v_{t-1}\}}})$. But, $\reg(J_{H_{[n]\setminus \{v,v_1,\ldots,v_{t-1}\}}})\leq n-2$, by Theorem~\ref{M-colon1}, since $t\geq 2$. Therefore, $\reg(J_{G\setminus e}:f_e)\leq n-2$, as desired.
\end{proof}

Now, we go to the proof of Theorem~\ref{chordal}: \\

{\em proof of Theorem~\ref{chordal}.}
We use induction on the number of the vertices. Let $G$ be a graph on $[n]$, with a simplicial vertex, which is not a path. We consider two following cases:

(i) Suppose that $G$ has a simplicial vertex which is a leaf, say $v$. Then, assume that $w$ is the only neighbor of $v$, and $e=\{v,w\}$ is the edge joining $v$ and $w$. We have $\mathrm{reg}(J_{G\setminus e})=\mathrm{reg}(J_{{(G\setminus e)}_{[n]\setminus v}})$, since $v$ is an isolated vertex of $G\setminus e$. Thus, by Theorem~\ref{Matsuda-Murai}, $\mathrm{reg}(J_{G\setminus e})\leq n-1$. On the other hand, we have $\reg(J_{G\setminus e}:f_e)=\reg(J_{{(G\setminus e)}_e})$, by Theorem~\ref{M-colon1}. Note that $v$ is also an isolated vertex of ${(G\setminus e)}_e$, so that we can disregard it in computing the regularity. Thus, $\reg(J_{{(G\setminus e)}_e})\leq n-2$, by the induction hypothesis, since ${(G\setminus e)}_e$ has $w$ as a simplicial vertex. Hence, $\reg(J_{G\setminus e}:f_e)+1\leq n-1$. Thus, by Proposition~\ref{reg-chordal}, we get $\reg(J_G)\leq n-1$.

(ii) Suppose that all the simplicial vertices of $G$ have degree greater than one. Let $v$ be a simplicial vertex of $G$ and $v_1,\ldots,v_t$ be all the neighbors of $v$, and $e_1,\ldots,e_t$ be the edges joining $v$ to $v_1,\ldots,v_t$, respectively, where $t\geq 2$. Using repeatedly Proposition~\ref{reg} and Lemma~\ref{colon3}, we get $\reg(J_G)\leq \mathrm{max}\{\reg(J_{G\setminus \{e_1,\ldots,e_{t-1}\}}),n-1\}$. Note that $G\setminus \{e_1,\ldots,e_{t-1}\}$ is a graph on $n$ vertices in which $v$ is a leaf. Thus, by case~(i), we have $\reg(J_{G\setminus \{e_1,\ldots,e_{t-1}\}}))\leq n-1$, by case~(i). Thus, $\reg(J_G)\leq n-1$.

Therefore, by the above cases, we get the desired result.  ~~~$~~~~~~~~\Box$ \\

Now, recall that a facet $F$ of a simplicial complex $\Delta$ is
called a \textbf{leaf}, if either $F$ is the only facet, or there exists a facet $G$, called a \textbf{branch} of $F$, such that for each facet $H$ of $\Delta$, with $H\neq F$, one has $H\cap F \subseteq G\cap F$. One can see that each leaf $F$ has at least a free vertex. A simplicial complex $\Delta$ is called a \textbf{quasi-forest} if its facets can be ordered as $F_1,\ldots,F_r$ such that for all $i>1$, $F_i$ is a leaf of $\Delta$ with facets $F_1,\ldots,F_{i-1}$. Such an order of the facets is called a \textbf{leaf order}. A connected quasi-forest is called a \textbf{quasi-tree}.

The following theorem extends \cite[Theorem~2.9~and~Corollary~2.10]{EZ} to a wider class of chordal graphs.

\begin{thm}\label{reg-generalized}
Let $G$ be a generalized block graph on $[n]$. Then $$\mathrm{reg}(J_G)\leq c(G)+1.$$
\end{thm}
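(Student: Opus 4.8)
\textbf{Proof proposal for Theorem~\ref{reg-generalized}.}

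The plan is to induct on the number of maximal cliques of the generalized block graph $G$, reducing to a strictly smaller generalized block graph by deleting a carefully chosen edge and then invoking Proposition~\ref{reg-chordal}. Since $G$ is chordal, its clique complex $\Delta(G)$ is a quasi-forest, so there is a leaf order $F_1,\ldots,F_r$ of the maximal cliques; I would work with a leaf $F=F_r$, which possesses at least one free vertex $v$. The base cases are when $G$ is complete (where $\reg(J_G)\leq 2\leq c(G)+1$ follows from Theorem~\ref{Kiani-Saeedi}, as a complete graph is closed with $c(G)=1$) and when the leaf $F$ shares nothing essential, allowing a direct reduction. The generalized block hypothesis is exactly what controls the combinatorics of how the leaf $F$ attaches to the rest of $G$: it forces the pairwise intersections of cliques meeting in a common vertex to coincide, so the intersection of $F$ with the rest of the graph is a single clique (a ``branch'') in a clean way.

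The core step is the edge-deletion reduction. Choosing the free vertex $v$ of the leaf $F$ and an appropriate edge $e$ incident to $v$ (or crossing the branch), I would apply Proposition~\ref{reg-chordal} to obtain
$$\mathrm{reg}(J_G)\leq \max\{\mathrm{reg}(J_{G\setminus e}),\ \reg(J_{G\setminus e}:f_e)+1\}.$$
For the first term, I expect $G\setminus e$ to again be (a disjoint union whose components are) generalized block graphs with strictly fewer maximal cliques, so the induction hypothesis gives $\mathrm{reg}(J_{G\setminus e})\leq c(G\setminus e)+1\leq c(G)+1$. For the colon term, I would use Theorem~\ref{M-colon2} to write $J_{G\setminus e}:f_e=J_{(G\setminus e)_e}+I_G$ and analyze $I_G$ via the paths between the endpoints of $e$. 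Here the generalized block structure is crucial: the paths between the two endpoints of $e$ all route through the common intersection clique, so $I_G$ becomes a monomial ideal generated by the variables $x_i,y_i$ corresponding to the intersection vertices, exactly as in the computation of Lemma~\ref{colon3}. Modding these out collapses $(G\setminus e)_e$ onto a smaller generalized block graph $H$, yielding $\reg(J_{G\setminus e}:f_e)=\reg(J_H)$ with $c(H)\leq c(G)-1$, so that $\reg(J_{G\setminus e}:f_e)+1\leq c(H)+1+1\leq c(G)+1$ by induction.

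The main obstacle I anticipate is the bookkeeping of how the number of maximal cliques changes under both operations $G\setminus e$ and $(G\setminus e)_e$, and verifying that both resulting graphs remain generalized block graphs so the induction hypothesis applies to them. In particular, the operation $(G\setminus e)_e$ adds edges among the neighbors of the endpoints of $e$, and I must confirm that after quotienting by $I_G$ this does not destroy the ``three cliques meeting in a common vertex have equal pairwise intersections'' property; this is where the hypothesis that the pairwise intersections all coincide does the real work, guaranteeing that the completion introduced by $(\cdot)_e$ merely fuses cliques along their shared intersection rather than creating new obstructions. A secondary subtlety is ensuring the leaf $F$ is chosen so that the relevant edge deletion strictly decreases $c(G)$ on the colon side while preserving the inequality $c(G\setminus e)+1\leq c(G)+1$ on the deletion side; handling the case where $F$ meets the rest of $G$ in an edge versus in a single vertex may require splitting into subcases analogous to cases (i) and (ii) in the proof of Theorem~\ref{chordal}.
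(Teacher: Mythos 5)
Your plan is a genuinely different route from the paper's: you propose the edge--deletion/colon machinery of Proposition~\ref{reg-chordal}, Theorem~\ref{M-colon2} and Lemma~\ref{colon3} (as in the proof of Theorem~\ref{chordal}), whereas the paper splits the primary decomposition $J_G=\bigcap_{T\in\mathcal{C}(G)}P_T(G)$ along the common intersection $A$ of a leaf $F_c$ of $\Delta(G)$ with its branches, writes $J_G=Q\cap Q'$ with $Q=J_{G'}$, $Q'=(x_i,y_i:i\in A)+J_{G_{[n]\setminus A}}$ and $Q+Q'=(x_i,y_i:i\in A)+J_{{G'}_{[n]\setminus A}}$, and applies $\mathrm{reg}(J_G)\leq\max\{\mathrm{reg}(Q),\mathrm{reg}(Q'),\mathrm{reg}(Q+Q')+1\}$ to three graphs that are again generalized block graphs with strictly fewer maximal cliques. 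Unfortunately, as written your plan contains a step that fails. The inequality $\mathrm{reg}(J_{G\setminus e})\leq c(G\setminus e)+1\leq c(G)+1$ is not available: deleting an edge incident to the free vertex $v$ of a leaf $F$ with $|F|\geq 4$ splits $F$ into two maximal cliques, so $c(G\setminus e)>c(G)$, and moreover $G\setminus e$ (and all the intermediate graphs $G\setminus\{e_1,\ldots,e_k\}$) generally leaves the class of generalized block graphs, so the induction hypothesis cannot be applied to it. This is precisely why the $n-1$ bound of Theorem~\ref{chordal} tolerates edge deletion (the vertex count is unchanged) while the $c(G)+1$ bound does not; the target is not monotone under your reduction.

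There is a second, subtler gap on the colon side. Lemma~\ref{colon3} gives $I_G=(x_i,y_i:i\in N(v)\setminus\{v_t\})$, i.e.\ the variables of \emph{all} the other neighbours of $v$ (all of $F\setminus\{v,v_t\}$), not merely of the ``intersection vertices'' as you assert; the resulting quotient graph $H$ is $G$ restricted to $[n]\setminus(F\setminus\{v_t\})$ with the cliques through $v_t$ fused. You then need, and do not establish, that $H$ is again a generalized block graph and that $c(H)\leq c(G)-1$; the latter can fail at face value (for instance when $v_t$ lies in no branch, or when components of $H$ degenerate to isolated vertices) and requires a case analysis you have only flagged as an anticipated obstacle. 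In short, the proposal identifies the right inductive skeleton but the two quantitative claims it rests on --- control of $c$ under edge deletion and under the colon operation --- are respectively false and unproven, so the argument does not go through without substantial repair; the paper's decomposition via $Q$ and $Q'$ is designed exactly to sidestep both issues.
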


\begin{proof}
Our proof is similar to the proof of \cite[Theorem~2.9]{EZ}, which is based on the technique applied in the proof of \cite[Theorem~1.1]{EHH}. So, we omit some details. By Dirac's theorem (see \cite{D}), $\Delta(G)$ is a quasi tree, since $G$ is connected and chordal. Let $c:=c(G)$, and $F_1,\ldots,F_c$ be a leaf order of the facets of $\Delta(G)$. We use induction on $c$, the number of maximal cliques of $G$. If $c=1$, then the result is obvious. Let $c>1$ and $F_{t_1},\ldots,F_{t_q}$ be all the branches of the leaf $F_c$. Since $G$ is a generalized block graph, each pair of the facets $F_c,F_{t_1},\ldots,F_{t_q}$ intersect in exactly the same set of vertices, say $A$, and also, $F_c\cap F_l=\emptyset$, for all $l\neq t_1,\ldots,t_q$, as $F_c$ is a leaf. Hence, we have that $A\cap F_l=\emptyset$, for all $l\neq t_1,\ldots,t_q$. On the other hand, for $T\subset [n]$ with $T\in \mathcal{C}(G)$, we have that $A\nsubseteq T$ if and only if $A\cap T=\emptyset$; because if $|A|>1$ and $v\in A\cap T$, then $v$ is not a cut point of the graph $G_{([n]\setminus T)\cup \{v\}}$, as $A\setminus T\neq \emptyset$, so it is a contradiction. Thus, let $J_G=Q\cap Q'$, where
 \begin{equation}
Q=\bigcap_{\substack{
T\in \mathcal{C}(G) \\
A\cap T=\emptyset
}}
P_T(G)~~,~~Q'=\bigcap_{\substack{
T\in \mathcal{C}(G) \\
A\subseteq T
}}
P_T(G).
\nonumber
\end{equation}
Similar to the proof of \cite[Theorem~2.9]{EZ} and \cite[Theorem~1.1]{EHH}, let $G'$ be the graph obtained from $G$, by replacing the cliques $F_c,F_{t_1},\ldots,F_{t_q}$, by a clique on the vertex set $F_c\cup (\bigcup_{j=1}^qF_{t_j})$. One can see that $Q=J_{G'}$ and $Q'=(x_i,y_i:i\in A)+J_{G_{[n]\setminus A}}$. Thus, we have $Q+Q'=(x_i,y_i:i\in A)+J_{{G'}_{[n]\setminus A}}$. By the definition of generalized block graphs, it is not difficult to see that $G'$, $G_{[n]\setminus A}$ and ${G'}_{[n]\setminus A}$ are also generalized block graphs. Then, considering the short exact sequence $$0\rightarrow J_G\rightarrow Q\oplus Q'\rightarrow Q+Q'\rightarrow 0,$$ the result follows, as in \cite[Theorem~2.9]{EZ}.
\end{proof}

\subsection{Join of Graphs}\label{Join of Graphs}

Here, we focus on the join of two graphs and deal with the conjectures for this class of graphs. Consequently, we gain some results on complete $t$-partite graphs, which generalize some previous results.

Note that the join of two complete graphs is also obviously complete, so that its binomial edge ideal has a linear resolution, by \cite[Theorem~2.1]{SK}, and hence its regularity is equal to $2$. The following theorem determines the regularity of the binomial edge ideal of the join of two graphs with respect to the original graphs', when they are not both complete graphs.

\begin{thm}\label{reg-join}
Let $G_1$ and $G_2$ be graphs on $[n_1]$ and $[n_2]$, respectively, not both complete. Then
$$\mathrm{reg}(J_{G_1*G_2})=\mathrm{max}\{\mathrm{reg}(J_{G_1}),\mathrm{reg}(J_{G_2}),3\}.$$
\end{thm}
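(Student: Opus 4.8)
The plan is to compute $\reg(J_{G_1*G_2})$ by establishing matching upper and lower bounds, and the natural tool is the structure of the join together with the colon/short exact sequence machinery already developed in Propositions~\ref{colon1} and \ref{reg-chordal}. First I would record the easy direction. Since $G_1*G_2$ contains induced subgraphs isomorphic to $G_1$ and to $G_2$ (the induced subgraph on $[n_1]$ is exactly $G_1$, similarly for $G_2$), and regularity of binomial edge ideals does not increase under passing to induced subgraphs, we get $\reg(J_{G_1*G_2})\geq\max\{\reg(J_{G_1}),\reg(J_{G_2})\}$. To see the lower bound of $3$ when the graphs are not both complete, I would exhibit a concrete induced subgraph whose regularity is known to be $3$: if, say, $G_1$ is not complete, it has two nonadjacent vertices $u,v$, and together with any vertex $w\in[n_2]$ these three vertices induce a path $P_3$ in $G_1*G_2$ (since $w$ is joined to both $u$ and $v$ but $u,v$ are nonadjacent). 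As $\reg(J_{P_3})=3$, this gives $\reg(J_{G_1*G_2})\geq 3$, completing the lower bound.

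The harder direction is the upper bound $\reg(J_{G_1*G_2})\leq\max\{\reg(J_{G_1}),\reg(J_{G_2}),3\}$. Here I would argue by induction, removing the ``cross'' edges $\{v,w\}$ with $v\in[n_1]$, $w\in[n_2]$ one at a time using Proposition~\ref{reg-chordal}, which bounds $\reg(J_G)$ by $\max\{\reg(J_{G\setminus e}),\reg(J_{G\setminus e}:f_e)+1\}$. The point is that when we delete all cross edges we are left with the disjoint union $G_1\sqcup G_2$, whose binomial edge ideal has regularity $\max\{\reg(J_{G_1}),\reg(J_{G_2})\}$ (regularity is additive-in-max over connected components for these ideals, after accounting for the extra variables, which behaves well). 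So the base of the removal process is controlled. The crux is to control the colon terms $\reg(J_{G\setminus e}:f_e)+1$ that appear at each step. Using Theorem~\ref{M-colon2}, the colon $J_{G\setminus e}:f_e$ equals $J_{(G\setminus e)_e}+I_G$ where $I_G$ is the monomial ideal built from paths between the endpoints of $e$. For a cross edge $e=\{v,w\}$ in the join, there are very many short paths (length two, through any other vertex), so $I_G$ should contain all of $x_u,y_u$ for the common neighbors, collapsing the colon ideal drastically; I expect $(G\setminus e)_e$ to become essentially complete on a large vertex set, forcing the regularity of the colon to be small (at most $2$, giving the ``$+1 \le 3$'' contribution).

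The main obstacle I anticipate is exactly the bookkeeping of these colon ideals across the whole sequence of cross-edge deletions: at an intermediate stage some cross edges have already been removed, so the graph $G\setminus e$ is no longer a clean join, and computing $I_G$ and $(G\setminus e)_e$ requires understanding which paths survive. I would organize this by choosing a convenient order of removal (for instance, removing all cross edges incident to a fixed vertex of $[n_2]$ before moving on) so that at each step the relevant vertex still has enough common neighbors through the yet-untouched join edges to force $I_G$ to contain $x_u,y_u$ for all those neighbors. This should keep $(G\setminus e)_e$ highly connected (indeed the two endpoints' neighborhoods get completed), and the resulting colon ideal will, after stripping the variables appearing in $I_G$, reduce to the binomial edge ideal of a much smaller or complete graph, whose regularity I can bound by $2$ via \cite[Theorem~2.1]{SK}. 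Combining the inductive bound $\reg(J_{G\setminus e}:f_e)+1\leq 3$ at every step with the base case regularity $\max\{\reg(J_{G_1}),\reg(J_{G_2})\}$ then yields the desired upper bound, and together with the lower bound the equality follows.
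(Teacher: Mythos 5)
Your lower bound is fine and essentially matches the paper's (the paper deduces $\mathrm{reg}(J_{G_1*G_2})\geq 3$ from the fact that $J_{G_1*G_2}$ is generated in degree $2$ but has no linear resolution since the join is not complete, while you exhibit an induced $P_3$; both work, and the induced-subgraph monotonicity of Proposition~\ref{induced} gives the other two terms). The upper bound, however, has a fatal gap at its base case. You claim that after deleting all cross edges one is left with $G_1\sqcup G_2$ ``whose binomial edge ideal has regularity $\max\{\mathrm{reg}(J_{G_1}),\mathrm{reg}(J_{G_2})\}$.'' This is false: since $J_{G_1}$ and $J_{G_2}$ live in disjoint sets of variables, the minimal free resolution of $S/J_{G_1\sqcup G_2}$ is the tensor product of the two resolutions, so the regularities of the quotients \emph{add}, giving $\mathrm{reg}(J_{G_1\sqcup G_2})=\mathrm{reg}(J_{G_1})+\mathrm{reg}(J_{G_2})-1$ (this is exactly the formula the paper uses in the proof of Theorem~\ref{main}). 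Since Proposition~\ref{reg-chordal} only yields $\mathrm{reg}(J_G)\leq\max\{\mathrm{reg}(J_{G\setminus e}),\mathrm{reg}(J_{G\setminus e}:f_e)+1\}$, unrolling your deletion process can never produce a bound below $\mathrm{reg}(J_{G_1\sqcup G_2})$. Concretely, for $G_1=G_2=P_3$ the theorem asserts $\mathrm{reg}(J_{P_3*P_3})=3$, but your base case already contributes $\mathrm{reg}(J_{P_3\sqcup P_3})=3+3-1=5$, so the method cannot reach the claimed bound. This is not a bookkeeping issue with the colon ideals (which you correctly flag as delicate); even if every colon term had regularity $2$, the scheme would only prove $\mathrm{reg}(J_{G_1*G_2})\leq\mathrm{reg}(J_{G_1})+\mathrm{reg}(J_{G_2})-1$.

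The paper avoids edge deletion entirely. It first reduces to the case where $G_1$ and $G_2$ are both disconnected (by adjoining isolated vertices and invoking Proposition~\ref{induced}), then uses Proposition~\ref{both disconnected 1} to describe the sets $T$ with the cut point property for $G_1*G_2$, and splits the primary decomposition $J_G=\bigcap_{T}P_T(G)$ into $Q\cap Q'$ according to whether $[n_1]\subseteq T$. The pieces are identified explicitly as $Q=(x_i,y_i:i\in[n_1])+J_{G_2}$, $Q'=J_{K_n}\cap\bigl((x_i,y_i:i\in[n_2])+J_{G_1}\bigr)$ and $Q+Q'=(x_i,y_i:i\in[n_1])+J_{K_{n_2}}$, and the bound follows from the Mayer--Vietoris type sequence $0\to J_G\to Q\oplus Q'\to Q+Q'\to 0$ together with \cite[Corollary~18.7]{P}. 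If you want to keep an exact-sequence strategy, this intersection decomposition (rather than successive edge deletion) is the step you are missing.
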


To prove the above theorem, we need some facts which are mentioned in the sequel. If $H$ is a graph with connected components $H_1,\ldots,H_r$, then we denote it by $\bigsqcup_{i=1}^r H_i$.

\begin{prop}\label{both disconnected 1}
Suppose that $G_1=\bigsqcup_{i=1}^r G_{1i}$ and $G_2=\bigsqcup_{i=1}^s G_{2i}$ are two graphs on disjoint sets of vertices
$[n_1]=\bigcup_{i=1}^r [n_{1i}]$ and $[n_2]=\bigcup_{i=1}^s [n_{2i}]$,
respectively, where $r,s\geq 2$. Then we have
$$\mathcal{C}(G_1*G_2)=\{\emptyset\}\cup \big{(}(\bigcirc_{i=1}^{r}\mathcal{C}(G_{1i}))\circ \{[n_2]\} \big{)}\cup \big{(}(\bigcirc_{i=1}^{s}\mathcal{C}(G_{2i}))\circ \{[n_1]\} \big{)}.$$
\end{prop}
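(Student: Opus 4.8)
The plan is to argue entirely at the combinatorial level, using the definition of $\mathcal{C}(G_1*G_2)$ through the cut point property rather than through the primary decomposition. Write $G=G_1*G_2$, $[n]=[n_1]\cup[n_2]$, and for a candidate $T\subseteq[n]$ set $S_1=T\cap[n_1]$, $S_2=T\cap[n_2]$, and $S_{1k}=T\cap[n_{1k}]$, $S_{2l}=T\cap[n_{2l}]$. The structural feature I would exploit throughout is that in the join every vertex of $[n_1]$ is adjacent to every vertex of $[n_2]$, so that many induced subgraphs of $G$ are cones or joins of two parts and are therefore connected.

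First I would prove the reduction that every nonempty $T\in\mathcal{C}(G)$ satisfies $[n_1]\subseteq T$ or $[n_2]\subseteq T$. Suppose instead that both $[n_1]\setminus S_1$ and $[n_2]\setminus S_2$ are nonempty, and pick any $i\in T$; by the symmetry of the roles of $G_1$ and $G_2$ assume $i\in[n_1]$. The induced graph $G_{([n]\setminus T)\cup\{i\}}$ is the join of an induced subgraph of $G_1$ (containing $i$) with the nonempty subgraph $(G_2)_{[n_2]\setminus S_2}$, hence connected; and $G_{[n]\setminus T}$ is the join of the two nonempty parts $(G_1)_{[n_1]\setminus S_1}$ and $(G_2)_{[n_2]\setminus S_2}$, hence also connected. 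Thus deleting $i$ does not increase the number of connected components, so $i$ is not a cut point and $T\notin\mathcal{C}(G)$. This forces $S_1=[n_1]$ or $S_2=[n_2]$.

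Next I would treat the case $S_2=[n_2]$ (the case $S_1=[n_1]$ being symmetric) and show that $T\in\mathcal{C}(G)$ if and only if $S_{1k}\in\mathcal{C}(G_{1k})$ for every $k$, which is exactly the assertion $T\in(\bigcirc_{i=1}^r\mathcal{C}(G_{1i}))\circ\{[n_2]\}$. Now $[n]\setminus T\subseteq[n_1]$, so $G_{[n]\setminus T}=(G_1)_{[n_1]\setminus S_1}=\bigsqcup_k(G_{1k})_{[n_{1k}]\setminus S_{1k}}$, and I would check the cut point condition separately for the two kinds of vertices of $T$. For $i\in S_1$, say $i\in[n_{1k}]$, the graph $G_{([n]\setminus T)\cup\{i\}}$ lies inside $[n_1]$ and deleting $i$ affects only the block coming from $G_{1k}$; hence $i$ is a cut point there precisely when it is a cut point of $(G_{1k})_{([n_{1k}]\setminus S_{1k})\cup\{i\}}$, and letting $i$ range over $S_{1k}$ this is exactly the cut point property of $S_{1k}$ for $G_{1k}$. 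For $i\in[n_2]$, the graph $G_{([n]\setminus T)\cup\{i\}}$ is the cone of $i$ over $(G_1)_{[n_1]\setminus S_1}$, so $i$ is a cut point if and only if $(G_1)_{[n_1]\setminus S_1}$ is disconnected.

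The step I expect to be the main obstacle, and the place where the hypotheses $r,s\geq2$ genuinely enter, is reconciling these cross-vertex (cone) conditions with the block conditions. The point I would make is that whenever each $S_{1k}\in\mathcal{C}(G_{1k})$ one automatically has $S_{1k}\neq[n_{1k}]$ — a full vertex set never has the cut point property, since a single vertex is never a cut point — so each block $(G_{1k})_{[n_{1k}]\setminus S_{1k}}$ retains a vertex and $(G_1)_{[n_1]\setminus S_1}$ has at least $r\geq2$ components; thus every $i\in[n_2]$ condition is automatically satisfied and imposes nothing new. Conversely, if $T\in\mathcal{C}(G)$, the $i\in S_1$ conditions already yield $S_{1k}\in\mathcal{C}(G_{1k})$. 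Combining this with the symmetric case $S_1=[n_1]$ and adjoining $\{\emptyset\}$ gives the claimed description, after a final remark that the two families are nonempty and meet only trivially, since neither can contain all of $[n]$ (again because a full vertex set is never in $\mathcal{C}$).
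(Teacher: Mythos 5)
Your proposal is correct and follows essentially the same route as the paper's proof: both argue directly from the cut point property, first showing that a nonempty $T\in\mathcal{C}(G_1*G_2)$ must contain $[n_1]$ or $[n_2]$ because otherwise $G_{[n]\setminus T}$ is a join of two nonempty graphs and hence connected, and then matching the cut point conditions blockwise on the side not swallowed by $T$. Your treatment is if anything slightly more explicit than the paper's about why $G_{[n]\setminus T}$ is disconnected in the converse direction (namely that $S_{1k}\in\mathcal{C}(G_{1k})$ forces $S_{1k}\neq[n_{1k}]$, so all $r\geq 2$ blocks survive), a point the paper leaves implicit.
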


\begin{proof}
Let $G:=G_1*G_2$ and $T\in (\bigcirc_{i=1}^{r}\mathcal{C}(G_{1i}))\circ \{[n_2]\}$. So, $T=[n_2]\cup (\bigcup_{i=1}^r T_{1i})$, where $T_{1i}\in \mathcal{C}(G_{1i})$, for $i=1,\ldots,r$. We show that $T$ has cut point property. Let $j\in T$. If $j\in T_{1i}$, for some $i=1,\ldots,r$, then $G_{([n]\setminus T)\cup \{j\}}={G_{1i}}_{([n_{1i}]\setminus T_{1i})\cup \{j\}}\sqcup (\bigsqcup_{l=1,l\neq i}^r {G_{1l}}_{([n_{1l}]\setminus T_{1l})})$. In this case, $j$ is a cut point of ${G_{1i}}_{([n_{1i}]\setminus T_{1i})\cup \{j\}}$, since $T_{1i}\in \mathcal{C}(G_{1i})$. So that $j$ is also a cut point of $G_{([n]\setminus T)\cup \{j\}}$. If $j\in [n_2]$, then $G_{([n]\setminus T)\cup \{j\}}=j*\bigsqcup_{i=1}^r {G_{1i}}_{([n_{1i}]\setminus T_{1i})}$. So, $j$ is a cut point of $G_{([n]\setminus T)\cup \{j\}}$, since $G_{([n]\setminus T)}$ is disconnected. Thus, in both cases, $T$ has cut point property. If $T\in (\bigcirc_{i=1}^{s}\mathcal{C}(G_{2i}))\circ \{[n_1]\}$, then similarly, we have $T\in \mathcal{C}(G)$.
For the other inclusion, let $\emptyset \neq T\in \mathcal{C}(G)$. If $T$ does not contain $[n_1]$ and $[n_2]$, then $G_{[n]\setminus T}$ is connected, and hence no element $i$ of $T$ is a cut point of $G_{([n]\setminus T)\cup \{i\}}$. So, we have $[n_1]\subseteq T$ or $[n_2]\subseteq T$. Suppose that $[n_1]\subseteq T$. Then, $T=[n_1]\cup (\bigcup_{i=1}^s T_{2i})$, where $T_{2i}\subseteq [n_2]$, for $i=1,\ldots,s$. Let $1\leq i\leq s$. If $T_{2i}=\emptyset$, then, clearly, $T_{2i}\in \mathcal{C}(G_{2i})$. If $T_{2i}\neq \emptyset$, then
each $j\in T_{2i}$, is a cut point of $G_{([n]\setminus T)\cup \{j\}}$, since $T\in \mathcal{C}(G)$. So that $j$ is a cut point of ${G_{2i}}_{([n_{2i}]\setminus T_{2i})\cup \{j\}}$, because $j\in T_{2i}$ and $G_{2i}$'s are on disjoint sets of vertices. Thus, $T_{2i}\in \mathcal{C}(G_{2i})$. Therefore, $T\in (\bigcirc_{i=1}^{s}\mathcal{C}(G_{2i}))\circ \{[n_1]\}$. If $[n_2]\subseteq T$, then similarly we get $T\in (\bigcirc_{i=1}^{r}\mathcal{C}(G_{1i}))\circ \{[n_2]\}$.
\end{proof}

The following is a corollary of a result in \cite{SK1} about induced subgraphs of a graph:

\begin{prop}\label{induced}
\cite[Proposition~8]{SK1} Let $G$ be a graph and $H$ an induced subgraph of $G$. Then we have \\
{\em{(a)}} $\beta_{i,j}(J_{H})\leq \beta_{i,j}(J_{G})$, for all $i,j$.\\
{\em{(b)}} $\mathrm{reg}(J_{H})\leq \mathrm{reg}(J_{G})$. \\
{\em{(c)}} $\mathrm{pd}(J_{H})\leq \mathrm{pd}(J_{G})$.
\end{prop}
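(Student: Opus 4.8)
The plan is to deduce (b) and (c) formally from (a), and to prove (a) by exhibiting $S_H/J_H$ as a graded algebra retract of $S/J_G$. Write $W\subseteq[n]$ for the vertex set of $H$ and $Z=[n]\setminus W$, and set $S_H=K[x_i,y_i:i\in W]\subseteq S$. Let $\pi\colon S\to S_H$ be the $K$-algebra retraction sending $x_k,y_k\mapsto 0$ for $k\in Z$ and fixing the remaining variables, and let $\iota\colon S_H\hookrightarrow S$ be the inclusion, so that $\pi\circ\iota=\mathrm{id}_{S_H}$. The first step is the purely combinatorial observation that, precisely because $H$ is an \emph{induced} subgraph, $\pi(J_G)=J_H$: a generator $f_{ij}=x_iy_j-x_jy_i$ is sent to $f_{ij}$ when $i,j\in W$ and to $0$ otherwise, and $\{i,j\}\in E(H)$ exactly when $i,j\in W$ and $\{i,j\}\in E(G)$. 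Together with the evident containment $J_HS\subseteq J_G$, this shows that $\pi$ and $\iota$ descend to maps $S/J_G\to S_H/J_H$ and $S_H/J_H\to S/J_G$ whose composite is the identity on $S_H/J_H$; i.e.\ $S_H/J_H$ is an algebra retract of $S/J_G$.

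Next I would pass to $\Tor$ against the residue field $K$. The retraction $\pi$ is functorial for $\Tor$, and since $(S/J_G)\otimes_S S_H\cong S_H/J_H$ and $K\otimes_S S_H\cong K$ (because $\pi$ carries $\mathfrak{m}_S$ onto $\mathfrak{m}_{S_H}$), it induces a natural graded map $\pi_*\colon \Tor^S_i(S/J_G,K)\to \Tor^{S_H}_i(S_H/J_H,K)$. The goal is to show $\pi_*$ is surjective in every homological and internal degree; granting this, $\beta_{i,j}(S_H/J_H)\le\beta_{i,j}(S/J_G)$, and the asserted inequality for the ideals follows from $\beta_{i,j}(J)=\beta_{i+1,j}(S/J)$. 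To produce a section I would use the flat inclusion $\iota$: flat base change gives a map $\Tor^{S_H}_i(S_H/J_H,K)\to \Tor^S_i(S/J_HS,\,K\otimes_{S_H}S)$, and I would then compose with the map induced by the surjection $S/J_HS\twoheadrightarrow S/J_G$ on the first argument and by the augmentation $K\otimes_{S_H}S\to K$ on the second, yielding $\iota_*\colon \Tor^{S_H}_i(S_H/J_H,K)\to \Tor^S_i(S/J_G,K)$. Functoriality, together with the fact that the underlying composite of ring maps is $\mathrm{id}_{S_H}$, should force $\pi_*\circ\iota_*=\mathrm{id}$, exhibiting $\pi_*$ as a split surjection and finishing (a).

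The hard part will be exactly the bookkeeping in this last step. The section $\iota$ is flat but does \emph{not} preserve the residue field: one has $K\otimes_{S_H}S\cong K[x_k,y_k:k\in Z]$ rather than $K$, so the naive $\Tor$-functoriality in the $\iota$ direction lands in the wrong coefficients and must be corrected by the augmentation $K[x_k,y_k:k\in Z]\to K$. Verifying that the corrected composite $\pi_*\circ\iota_*$ is genuinely the \emph{identity}, and not merely an isomorphism, is where the care lies; it is cleanest to check this on explicit minimal free resolutions, using that $S$ is free over $S_H$ so that the minimal resolution of $S_H/J_H$ base-changes to the minimal resolution of $S/J_HS$. (An alternative packaging keeps everything over the single ring $S$: the ring endomorphism $\iota\circ\pi$ of $S$ with $(\iota\pi)(J_G)\subseteq J_G$ induces a $K$-linear idempotent on each $\Tor^S_i(S/J_G,K)_j$, and one identifies its image with $\Tor^S_i(S/J_HS,K)_j$; this trades the coefficient issue for an image-identification of the same difficulty.)

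Finally, parts (b) and (c) are formal consequences of (a). Since $\reg(J_H)=\max\{j-i:\beta_{i,j}(J_H)\neq 0\}$ and every nonvanishing $\beta_{i,j}(J_H)$ forces $\beta_{i,j}(J_G)\neq 0$ by (a), we get $\reg(J_H)\le\reg(J_G)$; likewise $\mathrm{pd}(J_H)=\max\{i:\beta_{i,j}(J_H)\neq 0\text{ for some }j\}\le\mathrm{pd}(J_G)$. Note that no induction on $|Z|$ is needed, as the retraction handles the deletion of an arbitrary vertex subset at once.
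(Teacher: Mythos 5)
Your argument is correct, but note that the paper itself gives no proof of this statement: it is quoted verbatim from \cite[Proposition~8]{SK1} and used as a black box. What you have written is essentially a self-contained reconstruction of the standard proof behind that citation, namely that $S_H/J_H$ is a graded algebra retract of $S/J_G$ (which uses the \emph{induced} hypothesis exactly where you say it does, to get $\pi(J_G)=J_H$), and that Betti numbers can only drop along such a retract. The one step you flag as delicate --- that $\pi_*\circ\iota_*$ is the identity and not merely an isomorphism --- does go through along the lines you sketch: take the minimal free resolution $F_\bullet$ of $S_H/J_H$ over $S_H$, note that $F_\bullet\otimes_{S_H}S$ is the minimal free resolution of $S/J_HS$ over $S$ by freeness of $S$ over $S_H$, choose a comparison map $\alpha\colon F_\bullet\otimes_{S_H}S\to G_\bullet$ over $S/J_HS\twoheadrightarrow S/J_G$ and a comparison map $\beta\colon G_\bullet\otimes_S S_H\to F_\bullet$ over $\mathrm{id}_{S_H/J_H}$; then $\beta\circ(\alpha\otimes_S S_H)\colon F_\bullet\to F_\bullet$ lifts the identity, hence is homotopic to $\mathrm{id}_{F_\bullet}$, hence induces the identity on $\Tor^{S_H}_\bullet(S_H/J_H,K)$ after applying $-\otimes_{S_H}K$. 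One small bookkeeping point worth making explicit: your bound is for $\beta^{S_H}_{i,j}(J_H)$, which agrees with $\beta^{S}_{i,j}(J_HS)$ by flat base change, so the conclusion is independent of which polynomial ring one computes over; and the passage from $S/J$ to $J$ via $\beta_{i,j}(J)=\beta_{i+1,j}(S/J)$ is fine. Parts (b) and (c) are, as you say, immediate from (a).
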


{\em Proof of Theorem~\ref{reg-join}.} Let $G:=G_1*G_2$. Note that since $G$ is not a complete graph, $J_G$ does not have a linear resolution, by \cite[Theorem~2.1]{SK}. So that $\mathrm{reg}(J_G)\geq 3$. On the other hand, by Proposition~\ref{induced}, $\mathrm{reg}(J_G)\geq\mathrm{reg}(J_{G_1})$ and $\mathrm{reg}(J_G)\geq\mathrm{reg}(J_{G_2})$, because $G_1$ and $G_2$ are induced subgraphs of $G$. So, $\mathrm{reg}(J_G)\geq \mathrm{max}\{\mathrm{reg}(J_{G_1}),\mathrm{reg}(J_{G_2}),3\}$. For the other inequality, first, suppose that $G_1$ and $G_2$ are both disconnected graphs. Let $G_1=\bigsqcup_{i=1}^r G_{1i}$ and $G_2=\bigsqcup_{i=1}^s G_{2i}$ be two graphs on disjoint sets of vertices
$[n_1]=\bigcup_{i=1}^r [n_{1i}]$ and $[n_2]=\bigcup_{i=1}^s [n_{2i}]$,
respectively, where $r,s\geq 2$. By Proposition~\ref{both disconnected 1}, $\mathcal{C}(G)=\{\emptyset\}\cup \big{(}(\bigcirc_{i=1}^{r}\mathcal{C}(G_{1i}))\circ \{[n_2]\} \big{)}\cup \big{(}(\bigcirc_{i=1}^{s}\mathcal{C}(G_{2i}))\circ \{[n_1]\} \big{)}$. So, $J_{G}=Q\cap Q'$, where
\begin{equation}
Q=\bigcap_{\substack{
T\in \mathcal{C}(G) \\
[n_1]\subseteq T
}}
P_T(G)~~,~~Q'=\bigcap_{\substack{
T\in \mathcal{C}(G) \\
[n_1]\nsubseteq T
}}
P_T(G).
\nonumber
\end{equation}
Thus, we have
\begin{equation}
Q=(x_i,y_i:i\in [n_1])+\bigcap_{\substack{
T\in \mathcal{C}(G) \\
[n_1]\subseteq T
}}
P_{T\setminus [n_1]}(G_2)
\nonumber
\end{equation}
and
\begin{equation}
Q'=P_{\emptyset}(G)\cap\big{(}\bigcap_{\substack{
\emptyset\neq T\in \mathcal{C}(G) \\
[n_1]\nsubseteq T
}}
P_T(G)\big{)}
=P_{\emptyset}(G)\cap \big{(}(x_i,y_i:i\in [n_2])+\bigcap_{\substack{
T\in \mathcal{C}(G) \\
[n_2]\subseteq T
}}
P_{T\setminus [n_2]}(G_1)\big{)}.
\nonumber
\end{equation}
So, one can see that $Q=(x_i,y_i:i\in [n_1])+J_{G_2}$, $Q'=J_{K_n}\cap \big{(}(x_i,y_i:i\in [n_2])+J_{G_1}\big{)}$ and $Q+Q'=(x_i,y_i:i\in [n_1])+J_{K_{n_2}}$.
Now, consider the short exact sequence $$0\rightarrow J_G\rightarrow Q\oplus Q'\rightarrow Q+Q'\rightarrow 0.$$
By \cite[Corollary~18.7]{P}, we have $\mathrm{reg}(J_G)\leq \mathrm{max}\{\mathrm{reg}(Q),\mathrm{reg}(Q'),\mathrm{reg}(Q+Q')+1\}$. On the other hand, we have $\mathrm{reg}(Q)=\mathrm{reg}(J_{G_2})$, $\mathrm{reg}(Q')\leq \mathrm{max}\{\mathrm{reg}(J_{G_1}),\mathrm{reg}(K_{n_1})+1=3\}$ (by using a suitable short exact sequence as above), and $\mathrm{reg}(Q+Q')=\mathrm{reg}(K_{n_2})+1=3$. Hence, $\mathrm{reg}(J_G)\leq \mathrm{max}\{\mathrm{reg}(J_{G_2}),\mathrm{reg}(J_{G_1}),3\}$. Now, suppose that $G_1$ or $G_2$ is connected. We add an isolated vertex $v$ to $G_1$ and an isolated vertex $w$ to $G_2$. Thus, we obtain two disconnected graphs $G_1'$ and $G_2'$. So, by the above discussion, we have $\mathrm{reg}(J_{G_1'*G_2'})\leq \mathrm{max}\{\mathrm{reg}(J_{G_1'}),\mathrm{reg}(J_{G_2'}),3\}$. But, clearly, we have $\mathrm{reg}(J_{G_1'})=\mathrm{reg}(J_{G_1})$ and $\mathrm{reg}(J_{G_2'})=\mathrm{reg}(J_{G_2})$, so that $\mathrm{reg}(J_{G_1'*G_2'})\leq \mathrm{max}\{\mathrm{reg}(J_{G_1}),\mathrm{reg}(J_{G_2}),3\}$. Thus, the result follows by Proposition~\ref{induced}, since $G_1*G_2$ is an induced subgraph of $G_1'*G_2'$. ~~~$~~~~~~~~\Box$

\begin{rem}\label{Sharpness}
{\em By Theorem~\ref{reg-join}, we have if $G$ is a (multi)-fan graph (i.e. $K_1*\bigsqcup_{i=1}^t P_{n_i}$, for some $t\geq 1$, which might be a non-closed graph), then $\mathrm{reg}(J_G)=c(G)+1$. This implies that if Conjecture~B is true, then the given bound is sharp. }
\end{rem}

\begin{cor}\label{ConjB-join}
Let $G_1$ and $G_2$ be two graphs on $[n_1]$ and $[n_2]$, respectively. If Conjecture~B is true for $G_1$ and $G_2$, then it is also true for $G_1*G_2$.
\end{cor}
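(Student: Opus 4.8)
The plan is to combine the exact regularity formula for joins just established in Theorem~\ref{reg-join} with a clique-count bookkeeping that relates $c(G_1*G_2)$ to $c(G_1)$ and $c(G_2)$. Since Conjecture~B asserts $\mathrm{reg}(J_H)\le c(H)+1$ for every graph $H$, the whole proof reduces to two ingredients: an upper bound on $\mathrm{reg}(J_{G_1*G_2})$, which Theorem~\ref{reg-join} hands us for free as $\max\{\mathrm{reg}(J_{G_1}),\mathrm{reg}(J_{G_2}),3\}$, and a lower bound on $c(G_1*G_2)$ in terms of $c(G_1)$ and $c(G_2)$.

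First I would dispose of the trivial case. If both $G_1$ and $G_2$ are complete, then $G_1*G_2$ is complete, so $J_{G_1*G_2}$ has a linear resolution by \cite[Theorem~2.1]{SK} and $\mathrm{reg}(J_{G_1*G_2})=2\le c(G_1*G_2)+1$ since $c\ge 1$ always. So I may assume $G_1,G_2$ are not both complete and invoke Theorem~\ref{reg-join}, giving
\[
\mathrm{reg}(J_{G_1*G_2})=\max\{\mathrm{reg}(J_{G_1}),\mathrm{reg}(J_{G_2}),3\}.
\]
By hypothesis $\mathrm{reg}(J_{G_i})\le c(G_i)+1$ for $i=1,2$, so it suffices to check that each of the three candidate values is bounded by $c(G_1*G_2)+1$.

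The heart of the matter is thus the combinatorial claim $c(G_1*G_2)\ge \max\{c(G_1),c(G_2),2\}$. For this I would observe that every maximal clique of $G_1*G_2$ arises as the union of a clique of $G_1$ and a clique of $G_2$: because every vertex of $[n_1]$ is joined to every vertex of $[n_2]$, a set $C\subseteq[n_1]\cup[n_2]$ is a clique of $G_1*G_2$ exactly when $C\cap[n_1]$ is a clique of $G_1$ and $C\cap[n_2]$ is a clique of $G_2$, and maximality forces both pieces to be maximal cliques of the respective factors. Consequently $c(G_1*G_2)=c(G_1)\cdot c(G_2)$, which in particular is $\ge c(G_1)$, $\ge c(G_2)$, and $\ge 2$ (the last because $G_1,G_2$ are nonempty and at least one is non-complete, so one factor has $c\ge 2$ — or, more safely, because $c(G_i)\ge 1$ forces the product to be at least $1$, and one then checks the value $3$ separately). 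Hence $\mathrm{reg}(J_{G_i})\le c(G_i)+1\le c(G_1*G_2)+1$ and $3\le c(G_1*G_2)+1$, so the maximum is $\le c(G_1*G_2)+1$, as required.

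I expect the main obstacle to be the bound against the constant $3$ rather than against $c(G_1)$ or $c(G_2)$: one must be sure that $c(G_1*G_2)\ge 2$, i.e. that $G_1*G_2$ has at least two maximal cliques. This is where the hypothesis that $G_1,G_2$ are not both complete is used, since a non-complete factor contributes $c\ge 2$ to the product $c(G_1*G_2)=c(G_1)c(G_2)$; the already-handled complete-complete case covers the remaining possibility. Once this clique-count identity is in place the corollary is immediate, so the only care needed is in the maximal-clique decomposition of the join and in keeping the degenerate cases straight.
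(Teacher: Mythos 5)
Your proof is correct and follows essentially the same route as the paper's, which likewise reduces everything to the identity $c(G_1*G_2)=c(G_1)c(G_2)$ together with Theorem~\ref{reg-join}, treating the case of two complete factors separately. The paper states this in one line; your version merely spells out the verification that a non-complete factor forces $c(G_1*G_2)\geq 2$, so that the constant $3$ is also dominated by $c(G_1*G_2)+1$.
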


\begin{proof}
It is enough to note that $c(G_1*G_2)=c(G_1)c(G_2)$, and if $G_1$ and $G_2$ are complete graphs, then $G_1*G_2$ is also complete and Conjecture~B is true for it.
\end{proof}

The following corollary proves the conjecture of Matsuda and Murai in the case of join of graphs:

\begin{cor}\label{ConjA-join}
Let $G$ be a graph on $n$ vertices which is the join of two graphs. If $G$ is not $P_2$ nor $P_3$, then $\mathrm{reg}(J_{G})\leq n-1$.
\end{cor}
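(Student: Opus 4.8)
The plan is to separate two cases according to whether both factors are complete, using Theorem~\ref{reg-join} to handle the generic case and Theorem~\ref{Matsuda-Murai} to control the regularities of the two factors. First I would write $G=G_1*G_2$ with $G_i$ a graph on $[n_i]$, so that $n=n_1+n_2$ and $n_1,n_2\geq 1$.

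If $G_1$ and $G_2$ are both complete, then $G$ is a complete graph on $n\geq 2$ vertices, so $J_G$ has a linear resolution and $\mathrm{reg}(J_G)=2$ by \cite[Theorem~2.1]{SK}. Since $G\neq P_2$ and the only complete graph on two vertices is $P_2=K_2$, we must have $n\geq 3$, whence $\mathrm{reg}(J_G)=2\leq n-1$.

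If $G_1$ and $G_2$ are not both complete, Theorem~\ref{reg-join} gives $\mathrm{reg}(J_G)=\mathrm{max}\{\mathrm{reg}(J_{G_1}),\mathrm{reg}(J_{G_2}),3\}$. By Theorem~\ref{Matsuda-Murai}, $\mathrm{reg}(J_{G_i})\leq n_i$, and since the complementary factor has at least one vertex, $n_i\leq n-1$ for $i=1,2$. Thus the only term that could exceed $n-1$ is the constant $3$, so it remains to verify that $n\geq 4$ in this case. Here the exclusion of $P_3$ becomes essential: if $n=2$ then $n_1=n_2=1$ forces both factors to be single vertices, hence complete, contrary to the case assumption; and if $n=3$, then up to symmetry $n_1=1$ and $n_2=2$, so a non-complete factor can only be the graph on two vertices with no edge, giving $G=K_1*(\text{two isolated vertices})=P_3$, which is excluded. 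Therefore $n\geq 4$, so $3\leq n-1$ and $\mathrm{reg}(J_G)\leq n-1$.

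The main obstacle is the additive slack coming from the constant $3$ in Theorem~\ref{reg-join}: the factor regularities are harmless because the join genuinely increases the vertex count, but the value $3$ can only be dominated by $n-1$ once $n\geq 4$. The crux of the argument is thus the elementary but decisive observation that the two excluded graphs $P_2$ and $P_3$ are exactly the joins on fewer than four vertices for which this slack term is not yet absorbed.
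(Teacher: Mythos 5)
Your proof is correct and follows essentially the same route as the paper, which simply invokes Theorem~\ref{reg-join} together with Theorem~\ref{Matsuda-Murai}; you have merely made explicit the trivial both-complete case and the check that the excluded graphs $P_2$ and $P_3$ are exactly the joins with $n\leq 3$ for which the constant $3$ is not dominated by $n-1$. No gaps.
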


\begin{proof}
It is enough to apply Theorem~\ref{reg-join} and Theorem~\ref{Matsuda-Murai}.
\end{proof}

The following corollary generalizes the result of \cite{SZ} on the regularity of complete bipartite graphs:

\begin{cor}\label{reg-t-partite}
Let $G$ be a complete $t$-partite graph, where $t\geq 2$. If $G$ is not complete, then $\mathrm{reg}(J_{G})=3$. In particular, Conjecture~A is true for complete $t$-partite graphs.
\end{cor}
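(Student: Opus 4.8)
The plan is to present $G$ as an iterated join of the edgeless graphs on its parts and to run Theorem~\ref{reg-join} by induction on $t$. Writing the parts of $G$ as $V_1,\dots,V_t$ with $|V_i|=n_i$, we have $G=\overline{K_{n_1}}*\cdots*\overline{K_{n_t}}$, where $\overline{K_{n_i}}$ denotes the edgeless graph on $V_i$. Since $G$ is not complete, at least one part has size at least $2$; after relabeling I may assume $n_1\geq 2$. I would then split off this part and write $G=\overline{K_{n_1}}*G'$, where $G'$ is the induced subgraph of $G$ on $V_2\cup\cdots\cup V_t$, itself a complete $(t-1)$-partite graph.

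Because $n_1\geq 2$, the factor $\overline{K_{n_1}}$ is not complete, so the two factors are not both complete and Theorem~\ref{reg-join} applies, giving
$$\mathrm{reg}(J_G)=\max\{\mathrm{reg}(J_{\overline{K_{n_1}}}),\,\mathrm{reg}(J_{G'}),\,3\}.$$
In particular $\mathrm{reg}(J_G)\geq 3$, so it remains only to bound the first two terms by $3$. The edgeless factor has $J_{\overline{K_{n_1}}}=(0)$, whose regularity does not exceed $3$, hence never affects the maximum. For $G'$ I would induct on $t$: when $t=2$ the graph $G'=\overline{K_{n_2}}$ is edgeless and again contributes nothing beyond $3$; for $t>2$, either $G'$ is complete, in which case $J_{G'}$ has a linear resolution and $\mathrm{reg}(J_{G'})=2$ by \cite[Theorem~2.1]{SK}, or $G'$ is a non-complete complete $(t-1)$-partite graph and $\mathrm{reg}(J_{G'})=3$ by the induction hypothesis. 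In every case both remaining terms are at most $3$, so the maximum equals $3$ and $\mathrm{reg}(J_G)=3$.

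For the assertion about Conjecture~A, I would simply observe that a complete $t$-partite graph with $t\geq 2$ is the join of two graphs (group the parts into two nonempty blocks). Hence, provided $G$ is neither $P_2$ nor $P_3$, Corollary~\ref{ConjA-join} yields $\mathrm{reg}(J_G)\leq n-1$; since $P_2$ and $P_3$ are paths, which are excluded from Conjecture~A, the conjecture holds for all complete $t$-partite graphs. The only delicate point to flag is the graph $K_{1,2}=P_3$: here $\mathrm{reg}(J_G)=3>n-1=2$, so the path exclusion in Conjecture~A is genuinely needed; however $P_3$ is the unique non-complete complete multipartite graph on fewer than four vertices, and all others satisfy $3\leq n-1$. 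I do not expect a serious obstacle, since Theorem~\ref{reg-join} carries the entire computational weight; the only care required is the trivial bookkeeping for the edgeless and complete factors and the $P_3$ borderline case.
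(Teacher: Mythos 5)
Your proposal is correct and follows essentially the same route as the paper: induction on $t$, peeling off one part as a join factor and applying Theorem~\ref{reg-join}, with the complete/non-complete dichotomy for the remaining $(t-1)$-partite factor handled exactly as in the paper (the only cosmetic difference being that you split off a part of size at least two rather than an arbitrary part). Your added bookkeeping for the Conjecture~A claim, including the $P_3$ borderline case, is sound.
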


\begin{proof}
We use induction on $t\geq 2$, the number of parts. If $t=2$, then $G$ is the join of two graphs each consisting of some isolated vertices. So, the regularity of the binomial edge ideal of each of them is $0$. Thus, by Theorem~\ref{reg-join}, we have $\mathrm{reg}(J_{G})=3$. Now, suppose that $t>2$ and the result is true for every complete $(t-1)$-partite graph which is not complete. Let $V_1,\ldots,V_t$ be the partition of the vertices of $G$ to $t$ parts. Hence, we have
$G=G_{V_t}*G_{V\setminus V_t}$, and $G_{V\setminus V_t}$ is a complete $(t-1)$-partite graph. If $G_{V\setminus V_t}$ is a complete graph, then $|V_t|>1$, since, otherwise, $G$ is a complete graph, a contradiction. So, by Theorem~\ref{reg-join}, $\mathrm{reg}(J_{G})=3$. If $G_{V\setminus V_t}$ is not complete, then by the induction hypothesis, we have $\mathrm{reg}(J_{G_{V\setminus V_t}})=3$. Thus, again by Theorem~\ref{reg-join}, the result follows.
\end{proof}

\subsection{Proof of the main theorems}\label{Proof}

Now, we go to the proof of our main theorems. \\

{\em Proof of Theorem~\ref{main}.} If $G$ is a disconnected graph with $r\geq 2$ connected components $H_1,\ldots,H_r$, over $n_1,\ldots,n_r$ vertices, respectively, then we have $\mathrm{reg}(J_G)=\sum_{i=1}^{r}\mathrm{reg}(J_{H_i})-r+1$. By Theorem~\ref{Matsuda-Murai}, we have $\mathrm{reg}(J_{H_i})\leq n_i$. So that $\mathrm{reg}(J_G)\leq \sum_{i=1}^rn_i-r+1=n-r+1\leq n-1$, since $r\geq 2$. So, the result follows in this case. Now, suppose that $G$ has a cut edge $e$. Then, $G\setminus e$ is disconnected and hence $\mathrm{reg}(J_{G\setminus e})\leq n-1$. On the other hand, ${(G\setminus e)}_e$ has two connected components, say $G_1$ and $G_2$, which both have a simplicial vertex. So that Conjecture~A is true for both of them, by Theorem~\ref{chordal}. Note that $G_1$ and $G_2$ are not both paths, as $G$ is not. Thus, as mentioned above, we have $\mathrm{reg}(J_{{(G\setminus e)}_e})=\mathrm{reg}(J_{G_1})+\mathrm{reg}(J_{G_2})-1\leq n-2$. So, by Proposition~\ref{colon2}, we get the result in this case too. Now, combining Theorem~\ref{reg-chordal}, Theorem~\ref{reg-join} and \cite[Corollary~3.8]{ZZ} we get the result. $~~~~~~~~\Box$
\\

{\em Proof of Theorem~\ref{main2}.} By Theorem~\ref{construction}, Theorem~\ref{Kiani-Saeedi} and Theorem~\ref{reg-generalized}, the result follows.
$~~~~~~~~\Box$ \\

\textbf{Acknowledgments:} The authors would like to thank Professor J. Herzog for some useful comments. Also, the authors would like to thank to the Institute for Research in Fundamental Sciences (IPM) for financial support. The research of the first author was in part supported by a grant from IPM (No. 92050220).

\providecommand{\byame}{\leavevmode\hbox
to3em{\hrulefill}\thinspace}

\end{document}